\numberwithin{equation}{section}
\theoremstyle{plain}
\newtheorem{thm}{Theorem}
\newtheorem{lemma}{Lemma}
\newtheorem{corollary}{Corollary}				
\newtheorem{proposition}{Proposition}
\theoremstyle{definition}
\newtheorem{definition}{Definition}
\newtheorem{assumption}{Assumption}
\newtheorem{remark}{Remark}
\def\E{{\mathbb E}}
\def\P{{\mathbb P}}
\def\R{{\mathbb R}}
\def\Z{{\mathbb Z}}
\def\P{{\mathbb P}}
\def\N{{\mathbb N}}
\def\cF {\mathcal{F}}
\def\cV{\mathcal V}
\def\bT{\mathbb T}
\begin{document}

\begin{frontmatter}
\title{Sparse space-time models: Concentration Inequalities and Lasso 
} 
\runtitle{}

\begin{aug}
\author{\fnms{G.} \snm{Ost}\thanksref{m1}
\ead[label=e2]{guilhermeost@gmail.com}}
\and
\author{\fnms{P.} \snm{Reynaud-Bouret}\thanksref{m2}\ead[label=e2]{reynaudb@unice.fr}} 

\thankstext{t1}{\today}
\runauthor{G. Ost and P. Reynaud-Bouret}

\affiliation{Universidade Federal do Rio de Janeiro, Brasil \thanksmark{m1} and Universit\'e C\^ote d'Azur, CNRS, LJAD, France\thanksmark{m2}}


\end{aug}

\begin{abstract}
Inspired by Kalikow-type decompositions, we introduce a new stochastic model of infinite neuronal networks, for which we establish sharp oracle inequalities for Lasso methods and restricted eigenvalue properties for the associated Gram matrix with high probability. These results hold even if the network is only partially observed.  The main argument  rely on the fact that concentration inequalities can easily be derived whenever the transition probabilities of the underlying process admit a {\it sparse space-time representation}.
\end{abstract}

\begin{keyword}[class=MSC]
\kwd{}
\kwd{}
\kwd{}
\end{keyword}

\begin{keyword}
\kwd{Restricted eigenvalue}
\kwd{Chains of infinite order}
\kwd{Perfect Simulation}
\kwd{Concentration inequalities}
\kwd{Oracle inequalities}
\kwd{Lasso Estimator}
\kwd{Stochastic neuronal networks}
\end{keyword}

\end{frontmatter}
\newcommand{\pat}[1]{\textcolor{magenta}{#1}}

\section{Introduction}
Lasso-type methods in classic regression settings assume that the corresponding Gram matrix  $G$  fulfills nice properties such as the Restricted Isometry Property (RIP), Restricted Eigenvalue (RE) conditions, etc. In many works  (see for instance \cite{BvdG:09,BvdG:11, vdG:16}
and references therein),  the explanatory variables involved in the Gram matrix are given at first and it is natural to define the regression model conditionally to these variables. In this sense, it is also natural to assume such properties on the Gram matrix $G$ without trying to show that they are fulfilled with high probability. In practice, it is computationally difficult to check whether $G$ satisfies or not these assumptions and many works have shown how random matrices can fulfill such properties with high probability (see for instance \cite{CT:05, RV:08} or the references in \cite{Tropp:15}).

However, in several probabilistic frameworks it is difficult to separate the study of the Gram matrix from the study of the process itself (see for instance \cite{KC:15} where the probabilistic framework is of auto-regressive kind or \cite{GM:19} for Markovian continuous processes). Several works have therefore shown that with high probability, Lasso or other adaptive methods satisfy  oracle inequalities or minimax results and that, on the same event, the corresponding Gram matrix (which is considered here also as a random variable) satisfies RIP or RE (see for instance \cite{KC:15, BM:15, JRW:15, HRBRSW:18}).

We are here interested in a particular type of stochastic process that can model the spiking events of a possibly infinite network of neurons. 
Many probabilistic models of neuronal activities in a network exist. As examples, let us cite continuous frameworks where both the voltage and the spiking activity of each neuron are modeled (see for instance \cite{SG:13}), or where the spike trains are directly modeled by point processes (see e.g \cite{chevallier} ). There are also approaches, closer to the present one, where discrete time is used (see \cite{CC:14,EvaGalves:15}).
 
Although many statistical methods have been developed to estimate the probability to spike given the past for various probabilistic models, most of the time one assumes that the network is fully observed (see for instance \cite{PC:09} on Wold processes, \cite{CSSW:17} on  Hawkes processes or \cite{MRW:18} on Poisson counts). 
Let us underline the work by \citep{hrbr}, which is the closest to ours from a statistical point of view, in which the authors applied a Lasso method on point processes and derived an oracle inequality on an event where the corresponding Gram matrix $G$ is invertible. In a second step, the authors have shown that $G$ is invertible with high probability when the observed process is a linear  Hawkes process and the small fixed number of   observed  neurons correspond in fact to the totality of the network (see also \cite{KSKL:10} for an application on real data of Lasso-type methods). 
However, in practice, data biologists record are much more scarce than a complete recordings of the whole network  activity. Most of the time, just few tens of neurons are recorded and they correspond to neurons that are embedded in at least a network of thousands of neurons.  
 
We are aware of only two articles which clearly deal more deeply with the problem of partial observation from a mathematical point of view : \cite{LT:16} and \cite{AAEG:16}.  In \cite{LT:16}, the authors assume that the configuration describing the neural activity 
follows a Gibbsian distribution, which does not take dependency in time into account, fact which is of the utmost importance in neuroscience. 
In \cite{AAEG:16}, the interaction neighborhood of a given neuron is estimated by assuming that we observe more neurons than this neighborhood even if it is not the totality of network. In practice, the complexity of the algorithm makes it difficult to apply it on large data sets. In these two works, the purpose is clearly to deal with the fact that some neurons (or nodes of the network)  are not recorded at all whereas the recordings are complete for the observed neurons. In a slightly different statistical setting, note also the work by  \cite{MRW:19}, which  deals with another kind of missing data, the applied filter being i.i.d. in both time and nodes but without nodes that are completely  unrecorded.
 
The aim of the present work is to show that  with high probability, the Gram matrix $G$ satisfies nice properties such as invertibility or RE condition with as few assumptions as possible on the underlying probabilistic models and this even if we observe only a small number of neurons embedded in an infinite neuronal network , for which most of the neurons are not observed at all.

Inspired by \cite{GL:13} model and Kalikow-type decompositions (see \cite{Kalikow:90, GGLO:13}), we consider discrete time models for which the probability of a neuron to spike at a given time unit given the past configuration may only depend on a few neurons (assimilated to space positions) and few time steps. Hence the dependencies  in time and space should be very small but may be random and chosen at each step. We use this {\it probabilistic sparsity} in time and space to prove concentration inequalities for various functionals including Gram matrices. We employ these concentration inequalities to prove that RE properties are satisfied with large probability even on a partially observed infinite network. Therefore we show that  Lasso  methods have good theoretical properties even in this case.

The paper is organized as follows. In Section \ref{not}, we provide the main notation and  present briefly the stochastic framework with its main assumptions. In Section \ref{lass}, we prove an oracle inequality for the Lasso estimator of the transition probabilities of this model. The oracle inequality is derived on a certain event on which some properties of the Gram matrix are met. Examples of useful dictionaries are also presented in this section. We discuss, in Section \ref{mod}, the definition of space-time decomposition and show examples of discrete time models where it applies.
Besides, thanks to the definition of a simulation algorithm inspired by \cite{GL:13}, we prove that stochastic models admitting a space-time decomposition have a stationary version even on infinite networks under some conditions of {\it probabilistic sparsity}. We prove that these conditions are usually much less stringent than the ones of the literature. Still in the same section, we obtain concentration inequalities for such processes, under some additional exponential constraints, by adapting arguments of \cite{V:97}. This allows us to prove that the introduced Gram matrices based on a partial observation of the network  are invertible or satisfy RE with high probability. This is done in Section \ref{back}. A brief conclusion is given in Section \ref{conclusion}. All proofs are given in the Section \ref{Sec:proof}.

\section{Stochastic framework and notation}
\label{not}


We write $\N$ to denote the set of natural numbers $\{0,1,2,\ldots\}$. The sets of integers $\{\ldots,-1,0,1,\ldots\}$ is denoted by $\Z$. 
The set of strictly negative and positive integers are denoted by $\Z_{-}$ and $\Z_{+}$ respectively. 

We consider a stationary stochastic chain ${\bf X}=(X_{i,t} )_{i\in I, t \in \Z}$ taking values in $\{0,1\}^{I\times\Z}$, defined on a probability space $ ( \Omega , \cF , \mathbb{P} )$,
where $I$ is a countable (possibly infinite) set. The set $I$ represents the set of neurons in the network. For each $i\in I$ and $t\in\Z$, 
$$
X_{i,t}=
\begin{cases}
1, \   \mbox{if neuron} \ i \ \mbox{spikes at time} \ t, \\
0 , \ \mbox{otherwise}.
\end{cases}
$$
The configuration of ${\bf X}$ at time $t\in\Z$ is denoted by $X_{t}=(X_{i,t}, i\in I).$ 
For $s,t\in \Z$  with $s<t$, $X_{i,s:t}$ stands for the collection $(X_{i,s},\ldots, X_{i,t})$ and $X_{s:t}$ for the collection $(X_{i,r} )_{ i\in I ,s\leq r\leq t}$. For each $t\in\Z$, $X_{-\infty:t}$ denotes the past history $(\ldots,X_{t-1},X_{t})$  of ${\bf X}$ at time $t+1$.
Note that the past histories have space-time components.
For $F\subset I$ and $t\in\Z$, $X_{F,t}=(X_{i,t},i\in F)$ denotes the configuration of 
${\bf X}$ at time $t$ restricted to set $F$. 
More generally, for any subset $v\subset I\times \Z$,  $X_v$ denotes the collection $(X_{i,t})_{(i,t)\in v}$.

\begin{assumption}
\label{Mod_Assump:1}
For each $t\in\Z$, given the past history $X_{-\infty:t}$, the neurons  spike independently of each other at time $t+1$, i.e., 
for any finite set $J\subset I$ and  $(a_i)_{i\in J} \in \{0,1\}^J,$
\begin{equation*}
\P(\cap_{i\in J}\{X_{i,t+1}=a_i\}| X_{-\infty:t}=x)=\prod_{i\in J} \P(X_{i,t+1}=a_i |  X_{-\infty:t}=x) \  \ \P\mbox{-a.e.} \ x\in\{0,1\}^{I\times \Z_{-}}.
\end{equation*}
\end{assumption}
 
Since the stochastic chain ${\bf X}$ is stationary, Assumption \ref{Mod_Assump:1} implies that the dynamics of ${\bf X}$ is fully characterized by the transition probabilities
\begin{equation*}
p_i(x)=\P(X_{i,0}=1| X_{ -\infty:-1}=x),  \ x\in\{0,1\}^{I\times \Z_{-}}, \ i\in I.
\end{equation*}
These transition probabilities are all assumed to be measurable functions of $x\in\{0,1\}^{I\times \Z_{-}}.$

Hereafter, we need the following notation.  For any neighborhood $v\subset I\times \Z_{-}$ and $x,y\in \{0,1\}^{I\times\Z_{-}}$, we write $x \stackrel{v}{=} y$ to indicate  $y_v=x_v$. 
For any real-valued function $f$ on $\{0,1\}^{I\times\Z_{-}}$ and subset $v\subset I\times \Z_{-}$,  we say $f$ is {\it cylindrical} in $v$  and write $f(x)=f(x_v)$, if $f(x)=f(y)$ for any $x,y\in \{0,1\}^{I\times \Z_{-}}$ such that $x \stackrel{v}{=} y$. 

Let us now present briefly the three other main probabilistic assumptions that we may require  in the article. They will be discussed in more details in the sequel.

We denote by $\mathcal{V}$ the collection of finite neighborhoods, i.e. finite subsets of $I\times \Z_{-}$ and we consider processes for which the following decomposition holds. 

\begin{assumption}[Space-time decomposition]
\label{Mod_Assump:2}
For all $v$ in $\mathcal{V}$ and $i$ in $I$,  there exists a $[0,1]$-valued measurable function $p_i^v(.)$, cylindrical in $v$,  and a non negative weight $\lambda_i(v)$,  such that for all $x\in\{0,1\}^{I\times \Z_{-}}$ and $i\in I$, 
$$
\left \{
\begin{array}{l}
     p_i(x) = \lambda_i(\emptyset) p_i^\emptyset(x) + \sum_{v\in \cV, v\neq \emptyset} \lambda_i(v) p_i^{v}(x),\\
     ~\\
   \sum_{v\in \mathcal{V}} \lambda_i(v) = 1.\\
\end{array}
\right.$$
\end{assumption}

As we will see in Section \ref{mod} where we discuss in details the implication of this assumption, this Kalikow-like assumption is actually met by a wide variety of stochastic processes modeling neuronal networks.

For all neighborhood $v\in\mathcal{V}$, let  $T(v)= -\min(s \in \Z_{-}, (j,s) \in v)$ be the corresponding time length, with the convention that $T(\emptyset)=0$. We also assume the following.
\begin{assumption}
\label{ass:exp_branch_proc}
There exists a strictly positive $\theta$ such that for all $i$,
$$\varphi_i(\theta)=\sum_{v\in \cV}  |v| e^{\theta T(v)}\lambda_i(v)$$
is finite and 
\begin{equation}\label{condlaplace}
 \varphi(\theta)=\sup_{i\in I} \varphi_i(\theta) <1.
\end{equation} 
\end{assumption}
This assumption, which can be seen as {\it probabilistic sparsity} (see Section \ref{mod}), means that the neighborhoods used in the previous decomposition are in average nor too large neither too timely spread. 

Finally, we will use also the following assumption to control the Gram matrices in Section \ref{back}.
\begin{assumption}
\label{minmu}
There exists some positive $\mu$ such that for all $i\in I$, for all $x$,
$$\mu \leq p_i(x)\leq 1-\mu,$$
\end{assumption}
This means that whatever the past, there is always enough randomness in the system.

Note that all these 4 assumptions are very general and are there to control the randomness of the underlying neuronal system without specifying any  parametric models. In particular in what follows,  the $p_i$'s are just approximated by a finite combination of elements of dictionaries, without being one  (see Section \ref{lass}): the overall purpose is to approximate $p_i$ by what happens in a finite subset of observed neurons, namely $F$, always keeping in mind that $F$ is embedded in a much more complex and potentially infinite network $I$, the overall complex stochastic interactions inside $I$ being governed by Assumptions \ref{Mod_Assump:1},  \ref{Mod_Assump:2}, \ref{ass:exp_branch_proc} and/or \ref{minmu} depending on the results.

\section{Lasso method and statistical notation\label{lass}}

For a finite $F\subset I$, subset of observed neurons, and integers $T>m\geq 1$ measuring the observation window, the aim is to estimate $x \mapsto p_i(x)$ for a fixed neuron $i\in F$, given the sample $X_{F,-(m-1):T}$.
To that end, for  each time $1\leq t\leq T$, we compare the past $X_{F,(t-m):(t-1)}$ to the current observation $X_{i,t}$ to guess what can  be a good approximation of $p_i(x)$.  
The intuition behind this strategy is that for a well-chosen  space-time neighborhood $v\subset I\times \Z_{-}$, it might be sufficient to know $x_v$ and not the whole past configuration $x$ to well approximate $p_i(x)$.


Given the sample $X_{F,-(m-1):T}$, one might consider several candidates to approximate $p_i(x)$. Here, we shall approximate $p_i(x)$ by linear combinations of a given dictionary  $\Phi$, i.e. a finite set of real-valued functions on $\{0,1\}^{I\times \Z_{-}}$ which are cylindrical in
$F\times \underline{m}$ with $\underline{m}=\{-m,\ldots, -1\}.$ More precisely, for each vector $a=(a_{\varphi})_{\varphi\in\Phi}\in \R^{\Phi}$, we denote
\begin{equation}
x\mapsto  f_a(x)=\sum_{\varphi\in\Phi}a_{\varphi}\varphi(x),
\end{equation}
the candidate encoded by the vector $a$ that should approximate $p_i(x)$. 
We assume that the functions in the dictionary are bounded in infinite norm by $\|\Phi\|_\infty$.

 We use the least-square contrast defined by
$$
C(f_a)=-\frac{2}{T}\sum_{t=1}^{T} f_a(X_{F,(t-m):(t-1)})X_{i,t}+\frac{1}{T}\sum_{t=1}^{T}f^2_a(X_{F,(t-m):(t-1)}), \ a=(a_{\varphi})_{\varphi\in\Phi}\in \R^{\Phi}.
$$
Observe that if for real-valued functions $f$ and $g$ on $\{0,1\}^{I\times \Z_-},$ we denote $\langle f, g \rangle_{T} = \frac{1}{T}\sum_{t=0}^{T-1} f(X_{-\infty:t})g(X_{-\infty:t})$ and $\|f\|_T$ the corresponding norm, then one has
$$
C(f_a)=-2\langle f_a, X_{i,\cdot+1} \rangle_{T} + \|f_a\|^2_T=\|f_a-X_{i,\cdot+1}\|^2_T- \|X_{i,\cdot+1}\|^2_T, 
$$
and  $C$ is minimum when $\|f_a-X_{i,.+1}\|^2_T$ is minimum. In this sense, minimizing $C$ over functions that are only depending on the past might give a good estimator of $p_i(x)$. 

Notice also that, if for $\varphi, \varphi'\in\Phi$ we write, 
\begin{equation}
\label{def:b_and_G}
b_{\varphi}=\frac{1}{T}\sum_{t=1}^{T}\varphi(X_{F,(t-m):(t-1)})X_{i,t} \quad \mbox{and} \quad \ G_{\varphi, \varphi'}=\frac{1}{T}\sum_{t=1}^{T}\varphi(X_{F,(t-m):(t-1)})\varphi'(X_{F,(t-m):(t-1)}),
\end{equation}
then $C(f_a)$ can be rewritten as
$$
-2 a^\intercal b+ a^\intercal G a,
$$
where $b=(b_{\varphi},\varphi\in\Phi)$ is a vector of $\R^\Phi$, $G=(G_{\varphi,\varphi'})_{\varphi, \varphi'\in\Phi}$ is the Gram matrix and $a^\intercal$ is the transpose of vector $a$. 

In the sequel, let  $|a|=(|a_{\varphi}|,\varphi\in\Phi)$, $|a|_{\infty}=\max_{\varphi\in\Phi}|a_{\varphi}|$, $\|a\|=\sqrt{ a^\intercal a}$ and $|a|_1={\bf 1}^\intercal |a|$ where ${\bf 1}$ is the vector with all coordinates equal to $1$.

Following \cite{hrbr}, we minimize $C(f_a)$ subject to a $\ell_1$-penalization on the vector $a=(a_{\varphi},\varphi\in\Phi)$. Precisely, we choose the function $\hat{f}=f_{\hat{a}}$ where 
\begin{equation}\label{lasso}
\hat{a}\in \arg\min_{a\in\R^{\Phi}}\left\{-2 a^\intercal b+ a^\intercal G a+ \gamma d |a| \right\},
\end{equation}
for $d$ a positive term controlling the random fluctuations  and $\gamma>0$, a tuning parameter.

The active set $S(a)$ of a vector $a\in\R^{\Phi}$ is the set $S(a)=\{\varphi: a_{\varphi}\neq 0 \}.$
We shall denote for any subset $J\subset \Phi$ and any $a \in \R^\Phi$, $a_J\in \R^\Phi $ the vector whose coordinates  in $J$ are equal to the ones of $a$ and $0$ anywhere else. We also denote by $|J|$ the cardinality of $J$.

For later use,  let us write for each $\varphi\in\Phi,$
\begin{equation}
\label{def:barb}
\bar{b}_{\varphi}=\frac{1}{T}\sum_{t=0}^{T-1}\varphi(X_{F,t-m:t})p_i(X_{-\infty:t}).
\end{equation}

\subsection{Examples of dictionaries}
\label{Ex_dict}
 Let us present briefly some examples of dictionaries that might be useful.

\begin{paragraph}{Short memory effect}
Let the dictionary  $\Phi$ be defined by the set $\{\varphi_j: j\in F\}$ where
$$\varphi_j(x)=
\begin{cases}
1, \ \mbox{if} \ x_{j,s}=1 \ \mbox{for some} \ -m\leq s \leq -1\\
0, \ \mbox{otherwise}
\end{cases}, \ x\in\{0,1\}^{I\times \Z_-},$$
so that we are trying to explain the presence of a spike on neuron $i$ at time $t$ by a linear combination of the presence of a spike on neuron $j$ in a small window just before time $t$.
\end{paragraph}

\begin{paragraph}{Cumulative effect}
We can also think that $m=\eta L$ is a much larger parameter and cut the past $\underline{m}$ into $L$ small pieces of length $\eta$, where the effect of the spikes are different and  cumulative. This leads to  the dictionary  $\Phi$ defined by the set $\{\varphi_{j,\ell}: j\in F \ \mbox{and} \ 1\leq \ell\leq L \}$ where
$$\varphi_{j,\ell}(x) =\sum_{s=-\eta \ell}^{-\eta(\ell-1)-1} x_{j,s}, \ x\in\{0,1\}^{I\times\Z_-}.$$
\end{paragraph}

\begin{paragraph}{Cumulative effect with spontaneous apparition}
It can be important to take into account a  background activity, especially to explain the apparition of spikes due to the unobserved part of the network. To do so, we may add to the previous dictionary an extra function
$$\varphi_0=1,$$
whose corresponding coefficient corresponds to a spontaneous activity.

\end{paragraph}

\begin{paragraph}{Hawkes dictionary}
In both the cumulative effect and the cumulative effect with spontaneous part, one might be interested in a particular example where $\eta=1$ and $L=m$. In particular, in the case with spontaneous part, we are therefore  interested in approximating 
$p_i(x)$ by
$$f_a(x)= a_0 + \sum_{j\in F}\sum_{-m\leq s\leq -1}a_{j,-s} x_{j,s}, \ x\in\{0,1\}^{I\times\Z_-},$$
which is the exact form of a discrete Hawkes process restricted to $F\times \underline{m}$ (see Section \ref{mod}) and this even if $p_i$ is not of this shape.
\end{paragraph}

Note that whatever the dictionary, $m$ represents the maximal delay in the dictionary and $|F|$ the number of observed neurons. As we will see in Section \ref{back}, both these quantities  have to be usually less than a certain increasing function of $T$, which depends on the dictionary (typically $\log(T)$), to derive an RE property on the Gram matrix. In particular $|m|$ might grow slightly with $T$ to ensure a good asymptotic approximation of the dependency in time.  Mathematically speaking, the same holds for $|F|$, although the size of $F$ is dictated by the neurophysiological experiment and, for practical prupose, it is always thought to be a constant with respect to $T$.

\subsection{Oracle inequality}

It is classical, by now, to derive oracle inequalities for Lasso procedures if $G$ satisfies some properties. We use two of them.

\begin{definition}
\label{Ass:Gram_matrix_invert}
Let $\kappa>0$. The matrix $G$ satisfies Property ${\bf Inv}(\kappa)$ if 
$$\forall a\in \R^{\Phi}, \quad a^\intercal G a \geq \kappa \|a\|^2$$ 
\end{definition}

A weaker version  is the restricted eigenvalue condition. 
\begin{definition}
\label{Ass:RE}
Let $c>0$, $\kappa>0$ and $s\in \mathbb{N}$. The matrix $G$ satisfies Property ${\bf RE}(\kappa,c,s)$ if for all  subset $J$ such that $|J|\leq s$ and 
for all $a\in \R^{\Phi}$ such that $$ |a_{J^c}|_1\leq c |a_J|_1, $$
the following holds $$a^\intercal G a \geq \kappa \|a_J\|^2.$$
\end{definition}

Our first result establish an oracle inequality for the estimator $\hat{f}=f_{\hat{a}}$ where $\hat{a}$ is defined by \eqref{lasso}.

\begin{thm}
\label{thm:Oracle_inequality}
Let $\gamma\geq 2$, $\kappa>0$ and $s\in \mathbb{N}$.
On the event  on which
\begin{enumerate}
\item[(i)] for all $\varphi\in\Phi$, $|b_{\varphi}-\bar{b}_{\varphi}|\leq d$, \\
\item[(ii)] and $G$ satisfies ${\bf RE}(\kappa,c(\gamma),s)$ with $c(\gamma)=\frac{\gamma+2}{\gamma-2}$,
\end{enumerate}
the following inequality is satisfied
\begin{equation}\label{oracle}
\| \hat{f}-p_i(\cdot) \|^2_T\leq \inf_{a\in\R^{\Phi}: \ |S(a)|\leq s}\left\{\|f_a-p_i(\cdot) \|^2_T + \kappa^{-1} |S(a)| d^2\frac{(\gamma+2)^2}{4}\right\}.
\end{equation}
Moreover for any $0<\delta<1$, if $d=d_\delta$ with
$$d_\delta= \sqrt{\|\Phi\|^2_\infty \frac{\log|\Phi|+\log(2\delta^{-1})}{2T}},$$
then $\P(\exists \varphi\in\Phi: |b_{\varphi}-\bar{b}_{\varphi}|>d)\leq \delta.$
\end{thm}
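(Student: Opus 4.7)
The statement splits cleanly into two independent pieces: a deterministic oracle inequality valid on the event where (i) and (ii) hold, and a union-bound-type probability estimate for event (i) alone. The plan is to prove them separately.

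For the oracle inequality I would run the classical ``basic inequality + restricted eigenvalue'' argument (Bickel--Ritov--Tsybakov style) adapted to this random-design setting. Exploiting the fact that $\hat a$ is a minimizer of $a^\intercal G a - 2a^\intercal b + \gamma d|a|_1$, and rewriting the criterion via the identity $\|f_c - p_i\|_T^2 = c^\intercal G c - 2 c^\intercal \bar b + \|p_i\|_T^2$ for both $c=\hat a$ and $c=a$, one obtains, for any competitor $a\in\mathbb{R}^\Phi$,
\[
\|\hat f - p_i\|_T^2 \leq \|f_a - p_i\|_T^2 + 2(\hat a - a)^\intercal(b-\bar b) + \gamma d(|a|_1 - |\hat a|_1).
\]
On event (i), Hölder bounds the middle term by $2d|\hat a - a|_1$. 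Setting $J=S(a)$, splitting $\hat a - a$ on $J$ versus $J^c$ (using $a_{J^c}=0$) and invoking the reverse triangle inequality $|a|_1 - |\hat a_J|_1 \leq |(\hat a - a)_J|_1$, one arrives at
\[
\|\hat f - p_i\|_T^2 + (\gamma-2)d|\hat a_{J^c}|_1 \leq \|f_a - p_i\|_T^2 + (\gamma+2)d|(\hat a - a)_J|_1.
\]

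Next I would do a case split. Either $\|\hat f-p_i\|_T^2 \leq \|f_a - p_i\|_T^2$ (trivially done), or the display above forces, since $\gamma\geq 2$, the cone condition $|(\hat a-a)_{J^c}|_1 = |\hat a_{J^c}|_1 \leq c(\gamma)|(\hat a-a)_J|_1$. With $|J|\leq s$ this is exactly what is needed to apply assumption (ii) to $\hat a - a$, yielding $\kappa \|(\hat a - a)_J\|^2 \leq (\hat a - a)^\intercal G (\hat a - a) = \|\hat f - f_a\|_T^2$. Together with Cauchy--Schwarz $|(\hat a - a)_J|_1 \leq \sqrt{s}\|(\hat a - a)_J\|$, this produces the key bound $|(\hat a - a)_J|_1 \leq \sqrt{s/\kappa}\,\|\hat f - f_a\|_T$. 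Plugging back into the previous display and applying an optimized AM-GM of the form $2xy \leq \alpha x^2 + y^2/\alpha$ together with the triangle inequality $\|\hat f - f_a\|_T \leq \|\hat f - p_i\|_T + \|f_a - p_i\|_T$, the announced inequality with constant $(\gamma+2)^2/4$ follows.

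For the probabilistic statement, observe that
\[
b_\varphi - \bar b_\varphi = \frac{1}{T}\sum_{t=1}^T \varphi(X_{F,(t-m):(t-1)})\bigl(X_{i,t} - p_i(X_{-\infty:t-1})\bigr).
\]
By Assumption~\ref{Mod_Assump:1} and the definition of the transition probabilities, the summands form a sequence of martingale differences with respect to the natural filtration generated by $(X_{-\infty:s})_s$: each centred increment $X_{i,t}-p_i(X_{-\infty:t-1})$ takes values in an interval of length at most $1$, and $|\varphi|\leq \|\Phi\|_\infty$, so each martingale increment has conditional range at most $\|\Phi\|_\infty$. Hoeffding--Azuma then yields $\mathbb{P}(|b_\varphi - \bar b_\varphi| > d) \leq 2\exp(-2Td^2/\|\Phi\|_\infty^2)$; the announced $d = d_\delta$ reduces this to $\delta/|\Phi|$, and a union bound over the $|\Phi|$ elements of the dictionary closes the argument.

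The main technical obstacle will be the constant-tracking in the third paragraph: getting exactly the factor $(\gamma+2)^2/4$ rather than a looser multiple requires an optimized AM-GM and a careful use of the nonnegative $(\gamma-2)d|\hat a_{J^c}|_1$ term that sits on the LHS and which one would conservatively drop in a first attempt. The remaining ingredients --- the basic inequality, the cone decomposition, Hoeffding--Azuma and the union bound --- are standard, and the sharp range bound $\|\Phi\|_\infty$ (rather than $2\|\Phi\|_\infty$) for the martingale increments is what produces the specific constant $1/2$ inside the formula for $d_\delta$.
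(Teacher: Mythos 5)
Your probabilistic part is correct and essentially identical to the paper's: the martingale-difference structure, the Azuma/Hoeffding bound with the conditional range $\|\Phi\|_\infty/T$ (giving the $1/2$ inside the square root), and the union bound all match.

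The deterministic part, however, has a genuine gap. You start from the \emph{basic inequality} obtained by comparing the criterion at $\hat a$ and $a$, namely
$$\|\hat f - p_i\|_T^2 \le \|f_a - p_i\|_T^2 + 2(\hat a - a)^\intercal(b-\bar b) + \gamma d\bigl(|a|_1 - |\hat a|_1\bigr),$$
and this inequality is too weak to deliver the stated sharp oracle bound. After the cone step and RE, it yields
$$\|\hat f - p_i\|_T^2 \le \|f_a - p_i\|_T^2 + (\gamma+2)d\sqrt{s/\kappa}\,\|\hat f - f_a\|_T,$$
and if you then invoke the triangle inequality $\|\hat f-f_a\|_T\le\|\hat f-p_i\|_T+\|f_a-p_i\|_T$ and any Young/AM--GM splitting, the best you can get is a bound of the form $\|\hat f-p_i\|_T^2\le C\,\|f_a-p_i\|_T^2+\text{(remainder)}$ with $C>1$, or, solving the quadratic $y^2\le z^2+q(y+z)$ exactly, $y\le z+q$ and hence $y^2\le z^2+2qz+q^2$ --- in either case there is an unwanted cross term $2q\,\|f_a-p_i\|_T$ that does not appear in \eqref{oracle}. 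Keeping the nonnegative $(\gamma-2)d|\hat a_{J^c}|_1$ term does not rescue this; it does not produce a $-\|\hat f-f_a\|_T^2$ on the right.

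The paper instead works with the KKT/subgradient identity, not the minimizer comparison. Writing $0\in 2G\hat a-2b+\gamma d\,\partial|\hat a|_1$, pairing against $\hat a - a$, and using monotonicity of the subdifferential together with a suitable $w\in\partial|a|_1$ gives (Lemma \ref{Lemma:Sharp_oracle})
$$2\langle\hat f-f_a,\hat f-p_i\rangle_T+\gamma d|\hat a_{S^c(a)}|_1\le\gamma d|\hat a_{S(a)}-a_{S(a)}|_1+2(b-\bar b)^\intercal(\hat a-a).$$
The crucial point is that $2\langle\hat f-f_a,\hat f-p_i\rangle_T=\|\hat f-p_i\|_T^2+\|\hat f-f_a\|_T^2-\|f_a-p_i\|_T^2$, so the left-hand side carries an \emph{extra} $+\|\hat f-f_a\|_T^2$ that the basic inequality does not. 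This is exactly what lets the final step absorb the linear term: setting $y=\|\hat f-f_a\|_T$ and $q=(\gamma+2)d\sqrt{s/\kappa}$, one uses $qy-y^2\le q^2/4$ --- no triangle inequality, no mixed term, and the constant $1$ in front of $\|f_a-p_i\|_T^2$ survives. To repair your argument you would need to replace the basic inequality with the subgradient linearization, which is the step you are missing.
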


Equation \eqref{oracle} is a classical oracle inequality (see for instance \cite{hrbr} or \cite{HRBRSW:18} for close set-ups). This result means that the Lasso estimator  gives the best $s$-sparse approximation of $p_i$ based on the dictionary $\Phi$ and that the price to pay is of the order of  $\kappa^{-1} s d^2$, if we assume that $\|\Phi\|_\infty \leq 1$. With the choice $d=d_\delta$,  we have therefore   a price of the order of $\kappa^{-1} s \frac{\log|\Phi|+\log(2\delta^{-1})}{T}$. Note that if we knew that $p_i$ can be indeed decomposed on $\Phi$, meaning that the model is true and that in particular $p_i$ only depends on $s$ elements of the dictionary $\Phi$, the price to pay anyway to estimate $p_i$  would be roughly of the order of $s /T$. Therefore if the logarithmic factor is a classical loss for adaptation in \eqref{oracle}, it remains to see the order of  $\kappa$, to see if \eqref{oracle} gives roughly the best possible rate.

Note that if $G$ satisfies ${\bf{Inv}}(\kappa)$ then one can choose $\gamma=2$ and $s=|\Phi|$ in Theorem \ref{thm:Oracle_inequality} and \eqref{oracle} can be rewritten as 
$$
\| \hat{f}-p_i(\cdot) \|^2_T\leq  \inf_{a\in\R^{\Phi}}\left\{\|f_a-p_i(\cdot) \|^2_T + 4\kappa^{-1} |S(a)| d^2 \right\},
$$
which is a sharper version of the result proved in \cite{hrbr} in continuous time, up to the fact that
they used more general weights which leads to a weighted $\ell_1$ norm in the criterion. The same refinement would have been possible but since the focus is here on the Gram matrix, we have decided to use a classical $\ell_1$ norm for sake of simplicity.

Note also that another (very easy) refinement consists in clipping $\hat{f}$ to ensure that it remains between 0 and 1. The same result holds for this clipped version. Another way to find similar results for estimators that are forced to be in $[0,1]$ is to use  penalized maximum likelihood. Many works  have used it (see for instance \cite{MRW:18} for Poisson counts  or \cite{BM:15, GM:19} in Gaussian Markovian set-ups). This comes with additional technicalities, in particular if the likelihood of the statistical model is not easy to compute, because the model is not Gaussian. In particular,  \cite{MRW:19} use  a setting very close to ours, but simpler (see also the examples in Section \ref{mod}) and make use of Taylor expansion to approximate the criteria. Translated here, the approximation would depend on the dictionary we use and would be more complex for each dictionary. Once again,  because the focus is here on the Gram matrix, we have decided to stick with the simplest Lasso result made for least-square contrast.

Results for controlling Gram matrices are numerous (see for instance \cite{BM:15,GM:19,HRBRSW:18} for simpler settings than the present one) but always assume that the whole network is observed and that {\it the target can be written on the dictionary}.
In \cite{hrbr}, which is the closest framework to the present one, it has 
been proved for instance that, if one observes the whole finite network and if the spike trains are linear Hawkes processes, then $G$ is invertible with large probability for well chosen dictionaries. In this case, the corresponding $\kappa$
is roughly lower bounded by a quantity which is exponentially small in the total number of neurons in the network. Here we would like to go beyond these assumptions and prove that even if 
\begin{itemize}
\item the model is wrong (i.e. $p_i$ is not Hawkes for instance) and $p_i$ cannot be written on the dictionary,
\item the network is infinite,
\item  we only observe a very partial subnetwork,
\end{itemize}
it is still possible to find good $\kappa$ with high probability and that the dependency in the number of neurons can be much better than these previous results.

The main idea consists in using very general Kalikow-type decomposition of the transition probabilty $p_i(x)$, that are available in discrete time  (see \cite{GGLO:13}) and that do not exist with such generality in continuous time (see however \cite{evapierre} for promising results in this direction).  

\section{Space-time decomposition and concentration\label{mod}}

\subsection{Definition}

Let us  first recall Assumption \ref{Mod_Assump:2}:
For all $v$ in $\mathcal{V}$ and $i$ in $I$,  there exists a $[0,1]$-valued measurable function $p_i^v(.)$, cylindrical in $v$,  and a non negative weight $\lambda_i(v)$,  such that for all $x\in\{0,1\}^{I\times \Z_{-}}$ and $i\in I$, 
$$
\left \{
\begin{array}{l}
     p_i(x) = \lambda_i(\emptyset) p_i^\emptyset(x) + \sum_{v\in \cV, v\neq \emptyset} \lambda_i(v) p_i^{v}(x),\\
     ~\\
   \sum_{v\in \mathcal{V}} \lambda_i(v) = 1.\\
\end{array}
\right.$$

The aforementioned decomposition can be interpreted as follows. At each time step, to decide whether the neuron $i$ spikes or not, we first choose a random space-time neighborhood $V\in\cV$ according to the distribution $\lambda_i$. Once this neighborhood $V$ is chosen, we decide if a spike of neuron $i$ occurs with probability $p_i^V(x_V)$ that depends only on the past history restricted to $V$. Note that $p_i^\emptyset(x)$ does not depend on $x$ at all, and we denote this value $p_i^\emptyset$. 

Such a space-time decomposition of the transition probabilities $\{p_i(x),i\in I,x\in\{0,1\}^{I\times \Z_{-}}\}$ generalizes the classical  Kalikow decomposition introduced in \cite{Kalikow:90} and further developed in \cite{CFF:02}, \cite{GGLO:13} and \cite{GL:13}.  The main difference consists in not forcing  the nesting of the neighborhoods $v$ that lie in the support of $\lambda_i$.  
This helps us to exploit the fact that in many cases the distributions $\lambda_i$ charge very few neighborhoods and that the cardinality of this neighborhood is usually very small, if the nesting is not forced. We speak in this case of {\it probabilistic sparsity}.

\begin{remark}
\label{rmk:1}
If we denote $q_i(x)=\P(X_{i,0}=0| X_{-\infty:-1}=x)=1-p_i(x)$ and $q^{v}_i(x)=1-p^{v}_i(x)$  for all $i\in I$, $x\in\{0,1\}^{I\times \Z_{-}}$, we can also write
$$
q_i(x)=\lambda_i(\emptyset) q_i^\emptyset(x) + \sum_{v\in \cV, v\neq \emptyset} \lambda_i(v) q_i^{v}(x),
$$ 
where for each  $v\in \mathcal{V}$, the function $q_i^v$ is cylindrical in $v$.
\end{remark}

\begin{remark}
For a given  space-time decomposition, one can use Remark \ref{rmk:1} to deduce that for all $i \in I,$
$$
\inf_{x\in\{0,1\}^{I\times \Z_{-}}}p_i(x)+\inf_{x\in\{0,1\}^{I\times \Z_{-}}}q_i(x)\geq \lambda_i(\emptyset).
$$
More generally, for any $v\in\cV$, one can show that
$$
\inf_{x\in \{0,1\}^{I\times \Z_{-}}}\left\{ \inf_{y\in\{0,1\}^{I\times \Z_{-}}:\ y \ \stackrel{v}{=} \ x}p_i(y)+\inf_{y\in\{0,1\}^{I\times \Z_{-}}:\ y \ \stackrel{v}{=} \ x}q_i(y)\right\}\geq \lambda_i(\emptyset)+\sum_{w\subseteq v, w\neq\emptyset}\lambda_i(w).
$$

One can also show that the  space-time decomposition is not unique.
This fact raises the question of whether there is an ``optimal'' decomposition of a given  transition probability. Such a question, however, is not discussed in this article. 
\end{remark}

\subsection{Main examples}
\label{Sec:main_examples}

\subsubsection{Markov chains}
Suppose $I$ is a singleton, say $I=\{1\}$, and denote $X_t$ instead of $X_{1,t}$ for convenience. Let us assume also that for all $x\in\{0,1\}^{\Z_{-}},$ 
$$
p(x)=\P(X_{0}=1| X_{-\infty:-1}=x)=\P(X_{0}=1| X_{-1}=x_{-1}),
$$
that is, the stochastic chain ${\bf X}=(X_t)_{t\in \mathbb{Z}}$ is a Markov chain of order 1 taking values in $\{0,1\}$. To shorten notation,  let 
$$
p^1=\P(X_t=1|X_{t-1}=1), q^1=1-p^1,
p^0=\P(X_t=1|X_{t-1}=0) \ \mbox{and} \ q^0=1-p^0.
$$
We can always write the transition probability $p(x)$ as 
$$p(x)=p^1x_{-1}+p^0(1-x_{-1}).$$
Let us denote $p=p^1\wedge p^0$, $q=q^1\wedge q^0$ and $\mu=p+q$.
If $0<\mu<1$, then  one can write
$$p(x)= \mu\frac{p}{\mu} + (1-\mu) \left[ \frac{p^1-p}{1-\mu}x_{-1}+ \frac{p^0-p}{1-\mu}(1-x_{-1})\right].$$
So one can use as space-time decomposition 
\begin{equation}
\left\{\begin{array}{lcll}
\lambda(\emptyset) &= & \mu \\
 p^\emptyset &= & \frac{p}{\mu}\\
\lambda(\{-1\}) &=&1-\mu\\
p^{\{-1\}} (x) &=&  \frac{p^1-p}{1-\mu}x_{-1}+ \frac{p^0-p}{1-\mu}(1-x_{-1})\\
\end{array}
\right..
\end{equation}
Note that the support of $\lambda$ is then reduced to $\{\emptyset,\{-1\}\}$.
Note also than it is quite straightforward to extend this to the multidimensional models considered in \cite{MRW:19}, for instance by saying that when the neighborhood is not empty, one picks $I$ which is finite in their case.

\subsubsection{Chains of infinite order}
Again suppose $I$ is a singleton. In this case, the stochastic chain ${\bf X}$ is described by the transition probability $\{p(x),x\in\{0,1\}^{\Z_{-}}\}$.  Denote for $ \ell\in\Z_+$, $\underline{\ell}$ the set $\{-\ell,...,-1\}$ and 
\begin{equation*}
\beta_{\ell}=\sup_{x\in \{0,1\}^{\Z_{-}}}\sup_{\begin{array}{c}y,z \in \{0,1\}^{\Z_{-}} \mbox{ s.t.} \\ y\stackrel{\underline{\ell}}{=}z\stackrel{\underline{\ell}}{=}x\end{array}}
\{|p(y)-p(z)| \}.
\end{equation*}
If there exist $\ell_0\geq 1$ such that $\beta_{\ell}=0$ for all $\ell\geq \ell_0$, then the stochastic chain ${\bf X}$ is called Markov Chain of Order $\ell_0$.
Otherwise, ${\bf X}$ is called Chain of Infinite Order. We refer the reader to \cite{FFG:01} for a comprehensive introduction to Chains of Infinite Order.

If $\beta_{\ell}\to 0$ as $\ell\to\infty$, then $\{p(x),x\in\{0,1\}^{\Z_{-}}\}$ is said to be {\it continuous} and the sequence $(\beta_{\ell})_{\ell\in\Z_+}$ is called the {\it continuity rate}.
Recall that $q(x)=1-p(x)$ for all $x\in\{0,1\}^{\Z_{-}}$. 
One can then compute  for $\ell\in\Z_+$,
$$
\alpha(\ell)=\inf_{x\in \{0,1\}^{\Z_{-}}} \left\{\inf_{y\in \{0,1\}^{\Z_{-}} \ s.t.\ y\stackrel{\underline{\ell}}{=}x}p(y)+\inf_{y\in \{0,1\}^{\Z_{-}} \ s.t.\   y\stackrel{\underline{\ell}}{=}x}q(y)\right\}.
$$
This allows us to define the distribution $\lambda$ which has support only on the $\underline{\ell}$'s and
\begin{equation}
\label{def:coef_kali_decomp_inf_order}
\lambda( \underline{\ell})=\alpha(\ell)-\alpha(\ell-1),
\end{equation}
where $\alpha(0)=\lambda(\emptyset)=\inf_{x\in \{0,1\}^{\Z_{-}}}p(x)+\inf_{x\in \{0,1\}^{\Z_{-}}}q(x)$. One can show that every continuous transition probability $\{p(x),x\in\{0,1\}^{\Z_{-}}\}$ admits a decomposition of the form:
\begin{equation}
\label{def:kali_decom_inf_order}
\begin{cases}
p(x)=\lambda(\emptyset)p^{\emptyset}+\sum_{\ell\in\Z_+}\lambda(\underline{\ell})p^{\underline{\ell}}(x)\\
\lambda(\emptyset)+\sum_{\ell\in\Z_+}\lambda(\underline{\ell})=1
\end{cases}.
\end{equation}
Moreover  \eqref{def:kali_decom_inf_order} is a space-time decomposition since  $p^{\emptyset}\in [0,1]$  and for each $\ell\in\Z_+,$  
$\{p^{\underline{\ell}}(x),x\in\{0,1\}^{\Z_{-}}\}$ is a transition probability of a Markov chain of order $\ell$. 

\subsubsection{Discrete-time linear Hawkes processes}
\label{Ex:Hawkes_processes}
Multivariate Hawkes processes (also referred in the neuroscience literature as a particular case of generalized linear models)  are often used to model interacting spike trains and especially the synaptic integration. In contrast to the classical framework where these processes are described in continuous time and are not linear, we focus here on a discrete-time and linear formulation,  where for $i\in I:$
\begin{equation}
\label{def_Hawkes:1}
\psi_i(x)=\nu_i+\sum_{s\in\Z_{-}}\sum_{j\in I}h_{j\to i}(-s)x_{j,s} , \mbox{ and } 
\begin{cases}
p_i(x)= \psi_i(x),  \ \mbox{ if } \psi_i(x) \in [0,1],\\
p_i(x)= 1,   \mbox{ if } \psi_i(x) >1, \\
p_i(x)=0,  \ \mbox{ if } \psi_i(x) <0. \\
\end{cases}
\end{equation}
In this formula,
 $\nu_i\geq 0$ represents the spontaneous activity of neuron $i$, that is its ability to produce spikes when there is no interaction. The interaction function $h_{j\to i}$ measures the amount of excitation (if positive) or inhibition (if negative) that a spike of neuron $j$  has on neuron $i$ after a delay $-s$ (a spike of neuron $j$ with delay $-s$ corresponds to $x_{j,s}=1$). 
  
For a given neuron $i\in I$, we write
$$A_i^+=\{(j,s)\in I\times \Z_{-}: h_{j\to i}(-s)>0\} \mbox{ and } A_i^-=\{(j,s)\in I\times \Z_{-}: h_{j\to i}(-s)<0\},$$
and define the maximal excitatory (respectively inhibitory) strength by 
$$\Sigma_i^+=\sum_{(j,s)\in A_i^+} |h_{j\to i}(-s)|  \mbox{ and }\Sigma_i^-=\sum_{(j,s)\in A_i^-} |h_{j\to i}(-s)|.$$
Let us assume that 
\begin{equation}
\label{Ass_Hawkes}
0\leq \nu_i-\Sigma_i^- \quad\mbox{ and } \quad  \nu_i+\Sigma_i^+ \leq 1,
\end{equation}
which implies in particular that whatever the past configuration $x\in\{0,1\}^{I\times \Z_{-}}$, the transition probability $p_i(x)\in [0,1]$ is always equal to $\psi_i(x)$. It also implies  that $\Sigma_i^+ +\Sigma_i^- \in [0,1]$. 

Then one can use for the space-time decomposition:
\begin{equation}
\label{Kalikow_decomp_HP}
\left\{\begin{array}{lcll}
\lambda_i(\emptyset) &= & 1-(\Sigma_i^++\Sigma_i^-) &\mbox{ which is } \geq 0 \mbox{ since } 0\leq \Sigma_i^++ \Sigma_i^-\leq 1, \\
 p_i^\emptyset &= & \frac{\nu_i-\Sigma_i^-}{\lambda_i(\emptyset)} &\mbox{ which is } \leq  1  \mbox{ since } \nu_i+\Sigma_i^+ \leq 1,\\
\lambda_i(\{(j,s)\}) &=&|h_{j\to i}(-s)| &\mbox{ for all } (j,s) \in A_i^+\cup A_i^-,\\
p_i^{\{(j,s)\}} (x) &=&   x_{j,s} &\mbox{ for all } (j,s) \in A_i^+,\\
p_i^{\{(j,s)\}} (x) &=&   (1-x_{j,s}) &\mbox{ for all } (j,s) \in A_i^-.
\end{array}
\right.
\end{equation}
It is moreover sufficient to assume that $  \Sigma_i^++ \Sigma_i^-< 1$ to have $\lambda_i(\emptyset)>0$.

The discrete-time linear Hawkes model is an interesting example, because even if the true interaction graph, that is the set of edges $(j,i)\in I\times I $ for which $h_{j\to i}$ is non zero,  is complete,  the random neighborhoods $V\in\cV$ of the space-time decomposition have cardinality at most 1 almost surely. This {\it probabilistic sparsity} is exploited in the sequel to obtain concentration inequalities.

Note that it is classically assumed for general Hawkes models that the spectral radius of the matrix $(\int |h_{j\to i}|)_{i,j}$ is smaller than 1 to ensure stationnarity of the whole multivariate process. Here this matrix can be reinterpreted as $H=(\sum_{\ell>1} |h_{j\to i}(\ell)|)_{i,j}$. Therefore  \eqref{Ass_Hawkes} is different from the usual assumption: it implies in particular that 
$$\Sigma^++\Sigma^-=H\bf{1} \leq \bf{1} \mbox{ coordinate per coordinate}$$ 
where $\Sigma^+=(\Sigma_i^+)_i$, $\Sigma^+=(\Sigma_i^+)_i$ and ${\bf 1}$ is the vector of 1's. 

\subsubsection{GL neuron model}
Let $W_{j \to i}\in \R$ with $i,j\in I$,  be a collection of real numbers such that $W_{j \to j } = 0 $ for all $j.$ For each $i\in I$, let $\varphi_i : \R  \to [ 0, 1 ]$ be a  non-decreasing measurable function and $g_i=(g_i(\ell))_{\ell\in\Z_+}$ be a sequence of strictly positive real numbers. 
Here, $ W_{j \to i } $ is interpreted as the {\it synaptic weight of neuron $j$ on neuron $i,$} $\varphi_i $ as the  {\it spike rate function} of neuron $i$ and $g_i $ as the {\it postsynaptic current pulse} of neuron i.

For each $x\in\{0,1\}^{I\times\Z_{-}}$ and $i\in I$, we define $L^i(x)=\sup\{s\in\Z_-: x_{i,s}=1\}.$ The stochastic chain ${\bf X}$ satisfies a  GL neuron model if the transition probabilities $\{p_i(x),i\in I, x\in\{0,1\}^{I\times\Z_{-}}\}$ are given by (see \cite{GL:13})
\begin{equation}
\label{def:trans_prop_GL}
p_i(x)
=
\begin{cases}
\varphi_i(0), \mbox{if} \ L^i(x)=-1,\\
\varphi_i\left(\sum_{j\in I} W_{j\to i} \sum_{s=L^i(x)+1}^{-1}g_j(-s)x_{j,s}\right), \mbox{otherwise}. 
\end{cases}
\end{equation}
\begin{remark}
Notice that the functions $L^i$'s introduce a structure of variable-length memory in the model. For this reason the GL neuron model was introduced in \cite{GL:13} under the name of {\it Systems of Interacting Chains with Memory of Variable Length}.
\end{remark}

\begin{paragraph}{Linear spike rate functions}

Let us consider the particular case where the parameters of the model are such that $\varphi_i(u)=\nu_i+u$ with $\nu_i\geq 0$ for each $i\in I$. 
Similarly to Section \ref{Ex:Hawkes_processes}, let us denote for each $i\in I,$ 
$$A_i^+=\{(j,s)\in I\times\Z_{-}: W_{j\to i}g_j(-s)>0\} \mbox{ and } A_i^-=\{(j,s)\in I\times\Z_{-}: W_{j\to i}g_j(-s)<0\},$$
and define the maximal excitatory (respectively inhibitory) strength by 
$$\Sigma_i^+=\sum_{(j,s)\in A_i^+} |W_{j\to i}|g_j(-s)  \mbox{ and }\Sigma_i^-=\sum_{(j,s)\in A_i^-} |W_{j\to i}|g_j(-s).$$
We also assume that 
\begin{equation}
\label{Ass_Linear_GL}
0\leq \nu_i-\Sigma_i^- \quad\mbox{ and } \quad  \nu_i+\Sigma_i^+ \leq 1.
\end{equation}
Under these assumptions, one can check that the transition probabilities \eqref{def:trans_prop_GL} also satisfy Assumption \ref{Mod_Assump:2}. 
Specifically, we can use
\begin{equation}
\label{ST_kalikow_decom_linear_GL}
\left\{\begin{array}{lcll}
\lambda_i(\emptyset) &= & 1-(\Sigma_i^++\Sigma_i^-) &\mbox{ which is } \geq 0 \mbox{ since } 0\leq \Sigma_i^++ \Sigma_i^-\leq 1, \\
 p_i^\emptyset &= & \frac{\nu_i-\Sigma_i^-}{\lambda_i(\emptyset)} &\mbox{ which is } \leq  1  \mbox{ since } \nu_i+\Sigma_i^+ \leq 1,\\
\lambda_i(\{(j,s)\}^{\downarrow i}) &=&|W_{j\to i}|g_j(-s) &\mbox{ for all }  (j,s) \in A_i^+\cup A_i^-,\\
p_i^{\{(j,s)\}^{\downarrow i}} (x) &=&   x_{j,s} 1_{x_{i,s:-1}=0} &\mbox{ for all } (j,s) \in A_i^+,\\
p_i^{\{(j,s)\}^{\downarrow i}} (x) &=&   (1-x_{j,s})1_{x_{i,s:-1}=0} &\mbox{ for all } (j,s) \in A_i^-,
\end{array}
\right.
\end{equation}
where $\{(j,s)\}^{\downarrow i}=\{(j,s),(i,s),\ldots ,(i,-1)\}$ is the augmentation of the set $\{(j,s)\}$ on the coordinate $i$ for each $(j,s)\in A_i^+\cup A_i^-.$ 
Hence, the random neighborhoods $V\in\cV$ have cardinality either $0$ (when $V=\emptyset$) or $s+1$ (when $V=\{(j,s)\}^{\downarrow i}$ with $j\neq i$) or $s$  (when $V=\{(i,s)\}^{\downarrow i}$ ).

\end{paragraph}

\begin{paragraph}{Non-linear spike rate functions}

In the previous work of \cite{GL:13}, the space-time decomposition is restricted to growing sequences of neighborhoods $v$ that are indexed by their range in time. For each $i\in I$, one assumes that there exists a growing sequence $J_i(1)=\{i\}, J_i(\ell) \subset J_i(\ell+1)$ of subsets of $I$ that corresponds to the space positions that are needed when looking at a past of length $\ell$, so that we can form $v_i(\ell)=J_i(\ell)\times\underline{\ell}$, defining a growing sequence of subsets of $I\times\Z_{-}$.

Next let us introduce the following quantities:
\begin{eqnarray*}
\label{def:kali_coeff_GL}
&&\alpha_i(\ell)=\inf_{x\in\{0,1\}^{I\times\Z_{-}}}\left\{\inf_{y\in\{0,1\}^{I\times\Z_{-}}:y\stackrel{v_i(\ell)}{=} x }p_i(y)+\inf_{y\in\{0,1\}^{I\times\Z_{-}}:y\stackrel{v_i(\ell)}{=} x }q_i(y)\right\} 
\end{eqnarray*}
and $\lambda_i(v_i(\ell))=\alpha_i(\ell)-\alpha_i(\ell-1)$, where for each $i\in I,$ $q_i(y)=1-p_i(y)$ and  $\lambda_i(\emptyset)=\alpha_{i}(0)=\inf_{x\in \{0,1\}^{I\times\Z_{-}}}p_i(x)+\inf_{x\in \{0,1\}^{I\times\Z_{-}}}q_i(x).$

Let us assume that
\begin{equation}
\label{Ass:kalikow_net_neurons}
\sup_{i\in I}\sum_{j\in I}|W_{j\to i}|<\infty, \ \sum_{\ell\in\Z_+}\sup_{i\in I}g_i(\ell)<\infty \ \mbox{and} \  \sup_{i\in I}|\varphi_i(u)-\varphi_i(v)|\leq \gamma |u-v|,  
\end{equation}
where $\gamma$ is a positive constant.

It has been proved in \cite{GL:13} (see Proposition 2) that the
transition probabilities $\{p_i(x),x\in\{0,1\}^{I\times\Z_{-}}\}$ admit the following  space-time decomposition:
\begin{equation}
\begin{cases}
p_i(x)=\lambda_i(\emptyset)p^{\emptyset}_i+\sum_{\ell\in \Z_+}\lambda_i(v_i(\ell))p^{v_i(\ell)}_i(x),\\
\lambda_i(\emptyset)+\sum_{\ell=1}^{+\infty}\lambda_i(v_i(\ell))=1,
\end{cases}
\end{equation}
with, $p^{\emptyset}_i\in [0,1]$ and for $\ell\geq 1$,  $p^{v_i(\ell)}_i(x)$ is a $[0,1]$-valued measurable function which is cylindrical in $v_i(\ell)$.

Hence, the transition probabilities $p_i$'s also satisfy  Assumption \ref{Mod_Assump:2} in the nonlinear case. The random neighborhoods $V\in\cV$ have cardinality either $0$ (when $V=\emptyset$) or $\ell |J_i(\ell)|$ (when $V=v_i(\ell)$). Note that in the non-linear case the neighborhoods $v_i(\ell)$ are dense in time by construction, whereas in the linear case one can obtain a stronger {\it probabilistic sparsity.}
\end{paragraph}

\subsection{Main properties}
\label{Sec:Perf_Simulation_and_conseq}

Before being able to assess a value to $X_{i,t}$ at site $(i,t)$ for fixed neuron  $i$ and time $t$, we need to understand  on which previous sites this value depends. To do so, we use the distribution $\lambda_i$ to obtain a space-time neighborhood of $(i,t)$.  More precisely, because the distribution $\lambda_i$ gives a neighborhood for neuron $i$ at time 0,  we need to shift it at time $t$ to obtain a realization of the random neighborhood for neuron $i$ at time $t$ by stationarity.
Hence if for every $t\in\Z$ and subset $A$ of $I\times \Z$,
$${A}^{\to t}=\{(j,s+t) \mbox{ for } (j,s)\in A\},$$
with the convention that $\emptyset^{\to t}=\emptyset,$ we can define the random neighborhood $K_{i,t}$ of site $(i,t)$ as $$K_{i,t}=V_{i,t}^{\to t}$$ where $V_{i,t}$ is drawn independently of anything else according to $\lambda_i$. We can proceed independently for all sites $(j,s)$ and obtain $K_{j,s}=V_{j,s}^{\to s}$. 

By looking recursively at the neighborhoods of the neighborhoods, we are building a whole genealogy in space and time of the site $(i,t)$, that is the list of sites that are really impacting the value $X_{i,t}$. This genealogy is random and depends only on the realizations of the neighborhoods, i.e. only on the distributions $\lambda_i$'s.

The study of this space-time genealogy is of utmost importance. Indeed if the genealogy is almost surely finite then we are able to follow classical constructions as done by \cite{GL:13} to write a perfect simulation algorithm. Moreover the study of the length of the genealogy enables us to cut time into almost independent blocks and therefore to have access to concentration inequalities, this second construction being inspired by \cite{V:97},  \cite{RBR:07} or \cite{hrbr}.

\subsubsection{Sufficient condition for finite genealogies}
 For all sites $(i,t)$, let us define recursively $A^1_{i,t}=K_{i,t}$ and for $n\geq 1,$
 $$A^{n+1}_{i,t}=(\cup_{(j,s)\in A^n_{i,t}}K_{j,s})\setminus\{A^1_{i,t}\cup\ldots\cup A^n_{i,t} \},$$ 
the genealogy stopped after $n+1$ generations. 

The complete genealogy is $G_{i,t}=\cup_{n=1}^\infty A^n_{i,t}$. It is finite if and only if  
$$N_{i,t}=\inf\{n\geq 1: A^{n}_{i,t}=\emptyset\},$$
is finite. 

This is a consequence of the following property.
\begin{assumption}
\label{ass:branch_proc_death}
For each $i\in I$, we assume that the mean size of the random neighborhood on neuron $i$ 
\begin{equation}
\label{def:offspring_mean}
\bar{m}_i=\sum_{v\in\cV}|v|\lambda_i(v),
\end{equation}
is finite and that the maximal mean size satisfies
\begin{equation}
\label{ass:bar_m_small_1}
\bar{m}=\sup_{i\in I}\bar{m}_i<1.
\end{equation}
\end{assumption}
 {\it Probabilistic sparsity} corresponds here to the fact that the mean size of the random neighborhoods are strictly less than 1.

Thanks to this assumption, we can prove the following result.
\begin{proposition}
\label{N_i_t_finite}
For each $i\in I$, $t\in\Z$  and $\ell\in\Z_+,$
\begin{equation*}
\P\left(N_{i,t}>\ell\right)\leq (\bar{m})^{\ell}.
\end{equation*}
In particular, under Assumption \ref{ass:branch_proc_death}, for all $i\in I$ and $t\in \Z$, 
\begin{equation}
\label{prob_N_i_t_finite}
\P(N_{i,t}<\infty)=1,
\end{equation}
that is all genealogies are finite almost surely.
\end{proposition}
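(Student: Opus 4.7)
The plan is to view $(|A^n_{i,t}|)_{n\geq 1}$ as a random process dominated by a subcritical Galton–Watson branching process and then apply a first-moment (Markov) bound. The second statement \eqref{prob_N_i_t_finite} will follow immediately from the first by letting $\ell\to\infty$, since $\bar m<1$.

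First, I would observe that $\{N_{i,t}>\ell\}=\{A^\ell_{i,t}\neq\emptyset\}$: indeed, by the recursive definition, once $A^n_{i,t}=\emptyset$ we also have $A^{n+1}_{i,t}=\emptyset$, so the stopping time $N_{i,t}$ coincides with the first empty generation. Therefore, by Markov's inequality,
\begin{equation*}
\P(N_{i,t}>\ell)=\P\bigl(|A^\ell_{i,t}|\geq 1\bigr)\leq \E\bigl[|A^\ell_{i,t}|\bigr],
\end{equation*}
and it suffices to prove by induction that $\E[|A^n_{i,t}|]\leq \bar m^{\,n}$ for every $n\geq 1$.

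For the base case $n=1$, $|A^1_{i,t}|=|K_{i,t}|=|V_{i,t}|$ (shifting does not change cardinality), and $V_{i,t}\sim \lambda_i$, so $\E[|A^1_{i,t}|]=\sum_{v\in\cV}|v|\lambda_i(v)=\bar m_i\leq \bar m$. For the induction step, let $\mathcal{F}_n$ denote the $\sigma$-algebra generated by the neighborhoods $V_{j,s}$ for $(j,s)\in A^1_{i,t}\cup\cdots\cup A^n_{i,t}$. Crucially, the set $A^{n+1}_{i,t}$ is built from $\{V_{j,s}:(j,s)\in A^n_{i,t}\}$, and by the very definition of $A^n_{i,t}$ (which excludes all previously visited sites), these variables are fresh and independent of $\mathcal{F}_n$, each $V_{j,s}$ having the law $\lambda_j$. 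Using that $A^{n+1}_{i,t}\subset \bigcup_{(j,s)\in A^n_{i,t}} K_{j,s}$ (the removal of already-visited sites only decreases cardinality), we get
\begin{equation*}
\E\bigl[|A^{n+1}_{i,t}|\,\big|\,\mathcal F_n\bigr]\leq \sum_{(j,s)\in A^n_{i,t}}\E\bigl[|K_{j,s}|\bigr]=\sum_{(j,s)\in A^n_{i,t}}\bar m_j \leq \bar m\,|A^n_{i,t}|.
\end{equation*}
Taking expectations and applying the induction hypothesis yields $\E[|A^{n+1}_{i,t}|]\leq \bar m\cdot \bar m^{\,n}=\bar m^{\,n+1}$, closing the induction.

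The main subtlety is the conditional independence argument in the induction step: one must make precise that, because $A^{n+1}_{i,t}$ excludes $A^1_{i,t}\cup\cdots\cup A^n_{i,t}$, the $V_{j,s}$ generating the $(n{+}1)$-th generation are exactly the ones attached to sites visited for the first time, hence independent of $\mathcal F_n$ and distributed as $\lambda_j$. With this checked, combining the two displays gives $\P(N_{i,t}>\ell)\leq \bar m^\ell$; letting $\ell\to\infty$ and using $\bar m<1$ gives $\P(N_{i,t}=\infty)=0$, which is \eqref{prob_N_i_t_finite}.
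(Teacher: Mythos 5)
Your proof is correct and essentially identical to the paper's: both apply Markov's inequality to $|A^\ell_{i,t}|$ and show by induction that $\E[|A^\ell_{i,t}|]\le\bar m^{\,\ell}$ by conditioning on the previous generation and using that the mean offspring size at each site is at most $\bar m$. The only slip is cosmetic: the $\sigma$-algebra $\mathcal F_n$ you defined already contains the $V_{j,s}$ with $(j,s)\in A^n_{i,t}$, so the independence claim should instead be phrased relative to $\sigma\bigl(V_{i,t},\,V_{j,s}:(j,s)\in A^1_{i,t}\cup\cdots\cup A^{n-1}_{i,t}\bigr)$, which determines $A^n_{i,t}$ while leaving the generation-$n$ neighborhoods fresh.
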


\subsubsection{Perfect Simulation Algorithm}
Fix a site $(i,t)$ and suppose we want to simulate $X_{i,t}$. 

Under Assumption \ref{ass:branch_proc_death}, we know the genealogy is finite almost surely and it is possible to build this genealogy recursively without having to generate all the $V_{j,s}$. Once the genealogy is obtained by going backward in time, it  is then sufficient to go forward and simulate the $X_{j,s}$'s in the genealogy according to the transitions $p^{V_{j,s}}(X_{K_{j,s}}).$ 

More formally, we can use two independent fields of independent uniform random variables on $[0,1]$, ${\bf U^1}=(U^1_{i,t})_{i\in I, t\in\Z}$ and ${\bf U^2}=(U^2_{i,t})_{i\in I, t\in\Z}$, such that the whole randomness of the construction is encompassed in the field ${\bf U^1}$ for the genealogies and in the field ${\bf U^2}$ for the forward transitions and such that conditionally on these two fields, the whole simulation algorithm is deterministic. But in practice, we generate $U^1_{j,s}$ and $U^2_{j,s}$ only if we need it. This leads to the following algorithm

\begin{itemize}
\item[Step 1.] Generate $U^1_{i,t}$ random uniform variable on $[0,1]$. Since $\cV$ is countable, one can order its elements such that $\cV=\{v_1,...,v_n,...\}$. Define the c.d.f. of $\lambda_i$ by $F_i(0)=\lambda_i(\emptyset)$ and for $n\geq 1$,
$$
F_i(n)=\lambda_i(\emptyset)+\sum_{k=1}^n\lambda_i(V_{k})
$$
and pick the random neighborhood of $(i,t)$ as
$$
K_{i,t}=V_{i,t}^{\to t} \mbox{ with } V_{i,t}=
\begin{cases}
\emptyset, \ \mbox{if} \ U^1_{i,t}\leq F_i(0),\\
v_n, \ \mbox{if} \ F_i(n-1) < U^1_{i,t}\leq F_i(n)\ \mbox{for some} \ n\geq 1  
\end{cases}.
$$
Initialize $A^1_{i,t}\leftarrow K_{i,t}$.
\item[Step 2.]  Generate recursively $U^1_{j,s}$ for ${j,s}\in A^n_{i,t}$, compute the corresponding $V_{j,s}$ and $K_{j,s}$ as in Step 1 and actualize
$A^{n+1}_{i,t}\leftarrow \left(\cup_{{j,s}\in A^n_{i,t}}K_{j,s}\right) \setminus\{A^1_{i,t}\cup\ldots\cup A^n_{i,t} \} $.
 After a finite number of steps, $A^n_{i,t}$ is empty and [Step 2.] stops. Let $N_{i,t}$ be the final $n$ of this recursive procedure and  the genealogy of $(i,t)$ is given by  $G_{i,t}= \cup_{n=1}^{N_{i,t}}A^n_{i,t}. $
\item[Step 3.] Note that the $(j,s)$'s in $A^{N_{i,t}-1}_{i,t}$  have therefore an empty neighborhood.
Generate i.i.d. uniform variables $U^2_{j,s}$ for $(j,s)$ in  $A^{N_{i,t}-1}_{i,t}$ and define
\begin{equation}
\label{def:Simulation_algorithm_step_2_first_part}
X_{j,s}=1\{U^2_{j,s}\leq p^{\emptyset}_j\}.
\end{equation}
\item[Step 4.] Recursively generate $U^2_{j,s}$ for $(j,s)$ in $A^{\ell}_{i,t}$ recursively from $\ell=N_{i,t}-2$ to $ell=1$ and define
\begin{equation}
\label{def:Simulation_algorithm_step_2_recursively_part}
X_{j,s}=1\{U^2_{j,s}\leq p^{V_{j,s}}_j(X_{K_{j,s}})\},
\end{equation}
In particular arrived at $\ell=1$, one generates
\begin{equation}
\label{def:Simulation_algorithm_step_2_last_part}
X_{i,t}=1\{U^2_{i,t}\leq p^{V_{i,t}}_j(X_{K_{i,t}})\}.
\end{equation}
\end{itemize}

It is well-known that the algorithm above not only shows the existence but also the uniqueness of the stochastic chain {\bf X} compatible with Assumptions 
\ref{Mod_Assump:1}, \ref{Mod_Assump:2} and \ref{ass:branch_proc_death}
(see for instance \cite{GL:13} for formal statement of this result in a close setup). 

Note that when simulating the linear Hawkes process, the algorithm reduces to a random walk in the past to find the genealogy, a random decision on the state $X_{j,s}$ at the end of the random walk and a forward decision of the other states $X_{j,s}$ which is then completely deterministic and just depends on the sign of $h_{j\to i}(-s)$.

\subsubsection{Time length of a genealogy}
We are now interested by the time length of a genealogy. Let, for each non-empty subset $A$ of $I\times \Z$, 
$$
\mathbb{T}(A)
=
\min\{s\in\Z: (j,s) \in A\}.
$$
We are interested by the variable $T_{i,t}$ which is equal to $t-\mathbb{T}(A_{i,t})$ if the genealogy $G_{i,t}$ is non empty and equal to $0$ if $G_{i,t}$ is empty. 
By stationarity its distribution does not depend on $t$ and the behavior of this variable is of course linked to the one of the variables $T(V_j)=-\bT(V_j)$ for $V_j$ obeying the distribution $\lambda_j$, with the convention that $T(\emptyset)=0$. We are interested by conditions under which the variable $T_{i,t}$ has a Laplace transform, that is when 
$$\theta \mapsto \Psi_i(\theta)=\E(e^{\theta T_{i,t}})$$
is finite for some positive $\theta$. To do so, we are going to assume Assumption \ref{ass:exp_branch_proc} that we recall here:
There exists a strictly positive $\theta$ such that for all $i$,
$$\varphi_i(\theta)=\sum_{v\in \cV}  |v| e^{\theta T(v)}\lambda_i(v)$$
is finite and 
\begin{equation}\label{condlaplace}
 \varphi(\theta)=\sup_{i\in I} \varphi_i(\theta) <1,
\end{equation} 

\begin{thm}\label{laplace_gen}
Under Assumption \ref{ass:exp_branch_proc},
for all $i$ in $ I$, $\Psi_{i}(\theta)$ is finite and
$$\Psi(\theta)=\sup_{i \in I} \Psi_{i}(\theta) \leq \frac{\sup_{i\in I} \lambda_i(\emptyset)}{1-\varphi(\theta)}.$$
\end{thm}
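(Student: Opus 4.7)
The plan is to build a fixed-point inequality for $\Psi(\theta)$ by exploiting the recursive structure of the genealogy. Concretely, the construction of Section \ref{Sec:Perf_Simulation_and_conseq} shows that, conditionally on $V_{i,t}$, the variables $T_{j',s'+t}$ for $(j',s')\in V_{i,t}$ are built from disjoint copies of the i.i.d.\ fields $\mathbf{U}^1,\mathbf{U}^2$ (if one uses the non-coalescent unfolding of the genealogy, which has the same deepest time as $G_{i,t}$). Hence, from the very definition of $T_{i,t}$, one has the distributional identity
\begin{equation*}
T_{i,t} \;=\; \begin{cases} 0 & \text{if } V_{i,t}=\emptyset,\\[1mm] \max_{(j',s')\in V_{i,t}} \bigl(-s' + T_{j',\,s'+t}\bigr) & \text{otherwise,}\end{cases}
\end{equation*}
where the variables $T_{j',s'+t}$ are independent of $V_{i,t}$ and, by stationarity, distributed like $T_{j',0}$.

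The next step is to linearise the maximum: since $e^{\theta\max_k a_k}\leq \sum_k e^{\theta a_k}$ for $a_k\geq 0$, conditioning on $V_{i,t}$ and using independence and stationarity gives
\begin{equation*}
\Psi_i(\theta)\;\leq\;\lambda_i(\emptyset)\;+\;\sum_{v\in\cV,\,v\neq\emptyset}\lambda_i(v)\sum_{(j',s')\in v} e^{-\theta s'}\,\Psi_{j'}(\theta).
\end{equation*}
Bounding $\Psi_{j'}(\theta)\leq \Psi(\theta)$ and using $-s'\leq T(v)$ for every $(j',s')\in v$, the inner sum is at most $|v|e^{\theta T(v)}$, so that
\begin{equation*}
\Psi_i(\theta)\;\leq\;\lambda_i(\emptyset)\;+\;\Psi(\theta)\,\varphi_i(\theta)\;\leq\;\sup_{j\in I}\lambda_j(\emptyset)\;+\;\Psi(\theta)\,\varphi(\theta).
\end{equation*}
Taking the supremum over $i$ and rearranging (using $\varphi(\theta)<1$) yields the announced bound.

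The only delicate point is that the rearrangement presupposes $\Psi(\theta)<\infty$, which is not obvious \emph{a priori}. I would handle this by truncation by generation: for $n\geq 0$, let $T^{[n]}_{i,t}$ be the maximum of $t-s$ over sites $(j,s)\in A^{1}_{i,t}\cup\cdots\cup A^{n}_{i,t}$ (and $0$ if this union is empty), and let $\Psi^{[n]}_i(\theta)=\E[e^{\theta T^{[n]}_{i,t}}]$. The same recursion, applied one generation at a time, now runs on bounded quantities ($\Psi^{[0]}_i(\theta)=1$) and yields
\begin{equation*}
\Psi^{[n]}(\theta)\;\leq\;\sup_{j}\lambda_j(\emptyset)\,\sum_{k=0}^{n-1}\varphi(\theta)^{k}\;+\;\varphi(\theta)^{n}\;\leq\;\frac{\sup_{j}\lambda_j(\emptyset)}{1-\varphi(\theta)}+\varphi(\theta)^{n}
\end{equation*}
by induction. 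Because $T^{[n]}_{i,t}\nearrow T_{i,t}$ as $n\to\infty$ (Proposition \ref{N_i_t_finite} guarantees the genealogy is finite a.s.), monotone convergence gives $\Psi^{[n]}_i(\theta)\nearrow \Psi_i(\theta)$, and letting $n\to\infty$ in the displayed inequality (using $\varphi(\theta)<1$) produces the finite bound $\Psi_i(\theta)\leq \sup_{j}\lambda_j(\emptyset)/(1-\varphi(\theta))$ uniformly in $i$. The main obstacle is precisely this finiteness argument: without the truncation step, the formal fixed-point manipulation is circular, and one must check that the $\max$-to-$\sum$ inequality, together with the exponential-moment control of $|v|e^{\theta T(v)}$ encoded in Assumption \ref{ass:exp_branch_proc}, propagates along generations with a contraction factor $\varphi(\theta)<1$.
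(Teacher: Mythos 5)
Your proof is correct and follows essentially the same route as the paper: truncate the genealogy by generation, linearize the maximum via $e^{\theta\max_k a_k}\leq\sum_k e^{\theta a_k}$, propagate the recursion $\Psi^{[n+1]}\leq\bar\lambda+\varphi(\theta)\Psi^{[n]}$ by conditioning on the first neighborhood, and pass to the limit by monotone convergence. The paper does not need the non-coalescent unfolding you introduce; it instead exploits that $K_{i,t}$, depending only on $U^1_{i,t}$, is independent of each $G^n_{k,r}$ with $r<t$, and the $\max$-to-$\sum$ bound already decouples the subtrees without requiring their mutual independence -- but this is a cosmetic difference, not a gap.
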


Note that if $\varphi_i(\theta)$ is finite for some positive $\theta$, $\lim_{\theta\to 0}\varphi_i(\theta) =\bar{m_i}$. Therefore if Assumption \ref{ass:branch_proc_death} is fulfilled, $\lim_{\theta\to 0}\varphi_i(\theta) <1$ and it is possible to find $\theta>0$ such that $\varphi_i(\theta) <1$ as soon as $\lambda_i$ has a Laplace transform. In this sense, and roughly speaking, Assumption \ref{ass:exp_branch_proc} is a  more stringent condition of {\it probabilistic sparsity} than Assumption \ref{ass:branch_proc_death}.

\subsubsection{Application on the main examples}

\begin{paragraph}{Markov chains}
In this case, $\bar{m}=1-\mu$ and  the condition \eqref{ass:bar_m_small_1} is satisfied as soon as  $\mu<1$. Moreover condition  \eqref{condlaplace} reduces to $e^{\theta}(1-\mu)<1$ and it is always possible to find such a $\theta>0$ as soon as $\mu<1$.
\end{paragraph}

\begin{paragraph}{Chains of infinite order}
The space-time decomposition \eqref{def:kali_decom_inf_order} implies that
$$
\bar{m}=\sum_{\ell=1}^\infty\ell\lambda(\underline{\ell}).
$$
Thus, the condition \eqref{ass:bar_m_small_1} is satisfied as soon as $$\sum_{\ell=1}^\infty\ell\lambda(\underline{\ell})<1$$
and similarly the condition \eqref{condlaplace} is satisfied as soon as
$$ \sum_{\ell=1}^\infty\ell e^{\theta \ell}\lambda(\underline{\ell}) <1.$$
Hence both can be verified if $\lambda$ is  sufficiently exponentially decreasing. Typically one can have $\lambda(\underline{\ell})=e^{-\lambda}\lambda^{\ell}/\ell!$ with $0<\lambda<1$ (Poisson distribution on the range) or  $\lambda(\underline{\ell})=(1-p)^{\ell}p$ with $1/2<p\leq 1$ (Geometric distribution on the range).
\end{paragraph}

\begin{paragraph}{Discrete-time linear Hawkes processes}
According to the space-time decomposition \eqref{Kalikow_decomp_HP}, it follows that for each $i \in I$,
$$
m_i=\Sigma^+_i+\Sigma^-_i.
$$  
Therefore, the condition \eqref{ass:bar_m_small_1} reduces to 
$$
\sup_{i\in I}(\Sigma^+_i+\Sigma^-_i)= \sup_{i \in I} \sum_{j,s} |h_{j\to i}(-s)|<1.
$$
Moreover the condition \eqref{condlaplace} becomes
$$\sup_{i \in I} \sum_{j,s} e^{\theta s} |h_{j\to i}(-s)| <1.$$
So if for instance we can rewrite $h_{j\to i}(-s) =w_{j\to i} g(-s)$ for a fixed function $g$ of mean 1, the condition \eqref{ass:bar_m_small_1}  reduces to
$$\sup_{i \in I} \sum_{j\in I} |w_{j\to i}|<1,$$
and the additional  condition \eqref{condlaplace} is fulfilled for a small enough $\theta$ as soon as $g$ has finite exponential moment.
\end{paragraph}

\begin{paragraph}{GL neuron model}
In the nonlinear case, it has been proved in \cite{GL:13} (cf. inequalities (5.57) and (5.58)) that for each $i\in I$ the following estimates hold:
\begin{equation}
\lambda_i(\emptyset)\leq \gamma\sum_{j\in I}|W_{j\to i}|\sum_{s\geq 1}g_j(s),
\end{equation}
and for $\ell\geq 1$,
\begin{equation}
\lambda_i(v_i({\ell}))\leq \gamma\left(\sum_{j\notin v_i({\ell})}|W_{j\to i}|\sum_{s\geq 1}g_j(s)+\sum_{j\in v_i({\ell})}|W_{j\to i}|\sum_{s\geq \ell}g_j(s)\right).
\end{equation}
Therefore, a sufficient condition (cf. inequality (2.9) of \citep{GL:13}) for Assumption \ref{ass:branch_proc_death} to hold is
\begin{equation*}
\sup_{i\in I}\sum_{\ell\geq 1}\ell |v_i({\ell})|\left(\sum_{j\notin v_i({\ell}) }|W_{j\to i}|\sum_{s\geq 1}g_j(s)+\sum_{j\in v_i({\ell})}|W_{j\to i}|\sum_{s\geq \ell}g_j(s)\right)<\frac{1}{\gamma}.
\end{equation*}

In the linear case (i.e. when $\varphi_i(u)=\nu_i+u$), the condition above reduces to
\begin{equation}
\label{ass:bar_m_small_1_for_linear_GL}
\sup_{i\in I}\sum_{\ell\geq 1}\ell |v_i({\ell})|\left(\sum_{j\notin v_i({\ell}) }|W_{j\to i}|\sum_{s\geq 1}g_j(s)+\sum_{j\in v_i({\ell})}|W_{j\to i}|\sum_{s\geq \ell}g_j(s)\right)<1.
\end{equation}

Using the decomposition \eqref{ST_kalikow_decom_linear_GL}, one can verify that the condition \eqref{ass:bar_m_small_1} is, in the linear case, equivalent to
\begin{equation}
\label{ass:bar_m_small_1_for_gl_linear}
\sup_{i\in I}\sum_{\ell\geq 1}\left[\ell |W_{i\to i}|g_i(\ell)+\sum_{j\neq i, j\in I}(\ell+1)|W_{j\to i}|g_j(\ell)\right]<1.
\end{equation}
Note that  condition \eqref{ass:bar_m_small_1_for_linear_GL} is usually much stronger  than condition \eqref{ass:bar_m_small_1_for_gl_linear} and that a sparse space-time decomposition of the process allows us to derive existence of the linear process on a larger set of possible choices for $w_{j\to i}$ and $g_j$.
Once again condition \eqref{condlaplace} is fulfilled under a very similar expression
$$\sup_{i\in I}\sum_{\ell\geq 1} e^{\theta \ell}\left[\ell |W_{i\to i}|g_i(\ell)+\sum_{j\neq i, j\in I}(\ell+1)|W_{j\to i}|g_j(\ell)\right]<1,$$
this can be easily fulfilled if $g_j(\ell)=g(\ell)$ is exponentially decreasing with $\sum_{\ell=1}^\infty (\ell+1) g(\ell)=1$. Indeed \eqref{ass:bar_m_small_1_for_gl_linear} is implied as in the Hawkes case by 
$$\sup_{i\in I} \sum_{j\in I} |W_{j\to i}| <1$$
and it is easy to find by continuity a small $\theta>0$ such that \eqref{condlaplace} is fulfilled too.

\end{paragraph}

\subsection{Concentration}

\subsubsection{Block construction} 

Thanks to the control of the time length genealogy it is possible to cut  the observations $X_{F,-(m-1):T}$ into (overlapping) blocks that form with high probability two families of independent variables. This is a key tool to derive concentration inequalities. This construction is inspired by \cite{V:97}, who used as a central element, Berbee's lemma,  which is replaced here by Theorem \ref{laplace_gen}.
Note that similar coupling arguments have been used in continuous and more restrictive settings (see \cite{RBR:07,hrbr} for linear Hawkes processes, \cite{CSSW:17} for bounded Hawkes process and mixing arguments).

\begin{lemma}
\label{Lemma:1}
Let $m\in \Z_+$ and $F\subset I$ be a finite subset of the  neurons, observed on $-(m-1):T$. Let $B$, the grid size, be an   integer such that $$m\leq B\leq \lfloor T/2\rfloor$$ and define  $k=\lfloor \frac{T}{2B}\rfloor$. Let the $2k+1$ blocks be defined by, for $1\leq n\leq 2k$, $$I_n=\{(n-1)B+1-m,\ldots nB\} \mbox{ and } I_{2k+1}=\{2kB+1-m,\ldots T\}.$$
There exist on a common probability space  some stochastic chains ${\bf X}$, ${\bf X^1}$,...,${\bf X^{2k+1}}$  satisfying the following properties:
\begin{enumerate}
\item All the chains ${\bf X^n}=(X^n_{i,t})_{i\in I, t\in\Z}$ have the same distribution as ${\bf X}$ which satisfies Assumptions \ref{Mod_Assump:1}, \ref{Mod_Assump:2} and \ref{ass:exp_branch_proc} for a given $\theta$,  that is a sparse enough space-time decomposition with weights $(\lambda_i)_{i\in I}$ and transitions $(p_i^v)_{i\in I, v\in \cV}$.
\item  The odd chains ${\bf X^1},{\bf X^3},...,{\bf X^{2k+1}}$ are independent.
\item The even chains ${\bf X^2},...,{\bf X^{2k}}$ are independent.
\item There exists an event, $\Omega_{good}$, such that  on $\Omega_{good}$, $X_{F,I_n}=X^n_{F,I_n}$ for all $n=1,...,2k+1$ and
such that the probability of $\Omega^c_{good}$, under the notation of Theorem \ref{laplace_gen}, is at most
\begin{equation}
\label{prof_omega_good_complement}
|F|\left(2k+1\right)\frac{\Psi(\theta)}{(1-e^{-\theta})}e^{-\theta(B+1-m)}.
\end{equation}
\end{enumerate}
In particular, by choosing $B= m +\theta^{-1} ( 2 \log(T)+ \log(|F|))$, we obtain that there exists a positive $c'(\theta)$ such that the probability of $\Omega^c_{good}$ is at most
$c'(\theta)T^{-1}.$
\end{lemma}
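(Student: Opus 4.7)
The strategy is to realize both $\mathbf{X}$ and $\mathbf{X}^1,\dots,\mathbf{X}^{2k+1}$ as measurable images of independent uniform fields through the perfect simulation algorithm of Section \ref{Sec:Perf_Simulation_and_conseq}, arranging the randomness so that independence within each parity class and coupling with $\mathbf{X}$ both reduce to geometric disjointness of ``coupling time regions'' in $I\times\Z$. Enlarge the probability space to carry $2k+2$ mutually independent i.i.d.\ uniform $[0,1]$ fields on $I\times\Z$: a master pair $(\mathbf{U}^1,\mathbf{U}^2)$ producing $\mathbf{X}$, and, for each $n=1,\dots,2k+1$, an independent refresh pair $(\mathbf{U}^{1,n},\mathbf{U}^{2,n})$. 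Because Assumption \ref{ass:exp_branch_proc} implies Assumption \ref{ass:branch_proc_death}, the algorithm terminates almost surely and defines $\mathbf{X}$ with the prescribed law.

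Define the coupling time regions $R_1=\{s\in\Z:s\le B\}$ and $R_n=\{(n-2)B+1,\dots,nB\}$ for $n\ge 2$. These are pairwise disjoint within the odd subfamily $\{R_1,R_3,\dots,R_{2k+1}\}$ and within the even subfamily $\{R_2,R_4,\dots,R_{2k}\}$. Form the mixed field $\tilde{\mathbf{U}}^{\,n}$ by copying $(\mathbf{U}^1,\mathbf{U}^2)$ on $I\times R_n$ and $(\mathbf{U}^{1,n},\mathbf{U}^{2,n})$ on $I\times(\Z\setminus R_n)$, and let $\mathbf{X}^n$ be the chain produced from $\tilde{\mathbf{U}}^{\,n}$ by the perfect simulation algorithm. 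Each $\tilde{\mathbf{U}}^{\,n}$ is again an i.i.d.\ uniform field, so $\mathbf{X}^n$ has the same law as $\mathbf{X}$, giving (1). When restricted to disjoint regions, $(\mathbf{U}^1,\mathbf{U}^2)$ contributes mutually independent blocks; together with the independent refresh pairs, this makes $\tilde{\mathbf{U}}^{\,n}$ for odd $n$ (resp.\ even $n$) mutually independent, hence the chains $\mathbf{X}^n$ are mutually independent within each parity class, giving (2) and (3). Setting
$$\Omega_{good}=\bigcap_{n=1}^{2k+1}\bigcap_{(i,t)\in F\times I_n}\{G_{i,t}\subset I\times R_n\},$$
on $\Omega_{good}$ the master and mixed fields agree on every site queried by the perfect simulation to produce both $X_{i,t}$ and $X^n_{i,t}$, so $X^n_{F,I_n}=X_{F,I_n}$, which is (4).

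It remains to bound $\P(\Omega_{good}^c)$. For $n\ge 2$ and $(i,t)\in F\times I_n$, the event $\{G_{i,t}\not\subset I\times R_n\}$ equals $\{T_{i,t}>t-(n-2)B-1\}$, and Theorem \ref{laplace_gen} together with Markov's inequality yields $\P(T_{i,t}\ge\ell)\le\Psi(\theta)e^{-\theta\ell}$ for every $\ell\ge 0$. Summing over $t$ running from $(n-1)B+1-m$ to $nB$ produces a geometric series whose leading term $e^{-\theta(B+1-m)}$ corresponds to the leftmost $t$ and whose total is bounded by $e^{-\theta(B+1-m)}/(1-e^{-\theta})$; the block $n=1$ contributes nothing because $R_1$ already contains every time $\le B$. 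A union over $i\in F$ and $n\in\{1,\dots,2k+1\}$ gives exactly \eqref{prof_omega_good_complement}. For the final claim, plugging $B=m+\theta^{-1}(2\log T+\log|F|)$ into this bound gives $e^{-\theta(B+1-m)}=e^{-\theta}/(T^2|F|)$, and using $2k+1\le T/B+1\le 2T/m$ absorbs the remaining factors into $T^{-1}$ up to a constant $c'(\theta)$ depending only on $\theta$. The main point throughout is the clean separation between the independence mechanism (disjointness of the $R_n$'s within each parity) and the coupling mechanism (containment of the genealogies in $R_n$), after which everything reduces to the exponential tail of $T_{i,t}$ already supplied by Theorem \ref{laplace_gen}.
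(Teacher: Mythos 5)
Your construction is essentially the paper's: build all chains by the perfect simulation algorithm from $2k+2$ independent uniform fields, splicing the master field into each refresh field over a coupling time region so that odd (resp.\ even) regions are pairwise disjoint, then control the coupling event via the tail bound of Theorem~\ref{laplace_gen}. Two small points. First, your choice $R_1=\{s\le B\}$ is in fact a mild simplification over the paper's finite $R_1$: with an unbounded region the block $n=1$ contributes nothing to $\P(\Omega_{good}^c)$, whereas the paper must absorb a (harmless) $n=1$ term; both end up under \eqref{prof_omega_good_complement}. Second, your definition $R_n=\{(n-2)B+1,\dots,nB\}$ is not quite right for $n=2k+1$: since $k=\lfloor T/(2B)\rfloor$, $T$ can exceed $(2k+1)B$ (e.g.\ $T=2kB+B+1$ with $B\ge 2$), and then for $t\in I_{2k+1}\cap\{(2k+1)B+1,\dots,T\}$ the site $(i,t)$ itself lies outside $I\times R_{2k+1}$, so the master and spliced fields differ at the uniforms $U^1_{i,t},U^2_{i,t}$ that the algorithm must query, and $X^{2k+1}_{i,t}=X_{i,t}$ is not guaranteed on your $\Omega_{good}$. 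The fix is immediate: set $R_{2k+1}=\{(2k-1)B+1,\dots,T\}$ (this is exactly what the paper does); disjointness from $R_{2k-1}$ still holds, $I_{2k+1}\subset R_{2k+1}$ since $B\ge m$, and every bound in your argument is unchanged. Relatedly, your event $\{G_{i,t}\subset I\times R_n\}$ implicitly uses that $(i,t)\in I\times R_n$ as well; this holds precisely because $B\ge m$ forces $I_n\subset R_n$, and is worth stating since $G_{i,t}$ does not contain $(i,t)$ and may be empty. With these repairs the proof is correct and coincides with the paper's.
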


\subsubsection{Applications}
As an application of Lemma \ref{Lemma:1}, we can derive the following Hoeffding type concentration inequality.
\begin{thm}
\label{Hoeff}
Let ${\bf X}=(X_{i,t})_{i\in I,t\in \Z}$ be a stationary sparse space-time process satisfying Assumptions \ref{Mod_Assump:1}, \ref{Mod_Assump:2} and \ref{ass:exp_branch_proc} for a given $\theta$. For $F\subset I$ finite, $m\in\Z_+$, let $f$ be a real-valued function  of $X_{F,t-m:t-1}$ bounded by $M$.
Let $T \in \Z_+$ such that 
$$m +\theta^{-1} ( 2 \log(T)+ \log(|F|))\leq \lfloor T/2\rfloor $$
and
\begin{equation}
\label{def:sum_of_centered_variable}
Z(f)=\frac{1}{T}\sum_{t=1}^T \left(f(X_{F,t-m:t-1})-\E\left[ f(X_{F,t-m:t-1})\right]\right).
\end{equation}
Then there exists nonnegative constant $c',c"$, which only depends on $\theta$ such that, for any $x>0,$ 
\begin{equation}
\label{Concentration_sup}
\P\left(Z(f)> \sqrt{c"(\theta) M^2\frac{m+\log T+\log|F|}{T}  x  }\right)\leq \frac{c'(\theta)}{T}+2e^{-x}.
\end{equation}
If there is a finite family $\mathcal{F}$ of such $f$, we also have that
$$\P\left(\exists f \in \mathcal{F}, Z(f)>\sqrt{c"(\theta) M^2\frac{m+\log T+\log|F|}{T}  x  }\right)\leq \frac{c'(\theta)}{T}+2|\mathcal{F}| e^{-x}.$$
\end{thm}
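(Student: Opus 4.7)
The plan is to use the coupling from Lemma~\ref{Lemma:1} to replace the dependent sum defining $Z(f)$ by two sums of independent bounded centered variables, and then apply Hoeffding's inequality to each. I choose the block size $B = m + \theta^{-1}(2\log T + \log|F|)$, which is admissible by the hypothesis $B \leq \lfloor T/2\rfloor$ and gives $\P(\Omega_{good}^c) \leq c'(\theta)/T$. The first preparatory step is to partition the index set $\{1,\ldots,T\}$ into sub-intervals $J_n = \{(n-1)B+1,\ldots,nB\}$ for $1\leq n\leq 2k$ and $J_{2k+1} = \{2kB+1,\ldots,T\}$. The crucial point is that for every $t\in J_n$, the evaluation window $\{t-m,\ldots,t-1\}$ is entirely contained in the corresponding block $I_n$ from Lemma~\ref{Lemma:1}, so that on $\Omega_{good}$ one has $f(X_{F,t-m:t-1}) = f(X^n_{F,t-m:t-1})$ term by term.

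Defining the surrogate partial sums $\tilde S_n = \sum_{t\in J_n}\bigl(f(X^n_{F,t-m:t-1})-\E f(X_{F,t-m:t-1})\bigr)$, the identity $TZ(f) = \sum_{n=1}^{2k+1}\tilde S_n$ holds on $\Omega_{good}$, and each $\tilde S_n$ is centered (since $X^n \stackrel{d}{=} X$) and satisfies $|\tilde S_n| \leq 2M|J_n| \leq 4MB$. Splitting the sum by parity, $\tilde S_{odd} = \sum_{n\text{ odd}}\tilde S_n$ consists of independent variables by item~(2) of Lemma~\ref{Lemma:1}, and likewise $\tilde S_{even}$ by item~(3). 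Hoeffding's inequality, combined with $\sum_n |J_n|^2 \leq 2B\sum_n |J_n| \leq 4BT$, gives for either parity
\begin{equation*}
\P\bigl(\tilde S_{\text{parity}} > Ty/2\bigr) \leq \exp\!\left(-\frac{C\,T\,y^2}{M^2 B}\right)
\end{equation*}
for an absolute constant $C>0$.

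Since $B \leq C_\theta(m+\log T+\log|F|)$ by construction, choosing the threshold $y = \sqrt{c''(\theta) M^2 (m+\log T+\log|F|)\,x/T}$ with $c''(\theta)$ large enough (depending on $C$ and $C_\theta$) forces each parity tail to be at most $e^{-x}$. Finally, the decomposition
\begin{equation*}
\P(Z(f) > y) \leq \P(\Omega_{good}^c) + \P(\tilde S_{odd} > Ty/2) + \P(\tilde S_{even} > Ty/2)
\end{equation*}
yields \eqref{Concentration_sup}. The uniform statement over $\mathcal{F}$ follows by a union bound applied only to the two Hoeffding terms, since the event $\Omega_{good}$ from Lemma~\ref{Lemma:1} does not depend on the chosen $f$.

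The only delicate point, beyond routine bookkeeping, is the geometric verification that every window $\{t-m,\ldots,t-1\}$ fits inside a single $I_n$ --- this is exactly why the blocks $I_n$ were defined to overlap by $m$ steps in Lemma~\ref{Lemma:1}. Once this is checked, the separation of the dependence (handled by the coupling on $\Omega_{good}$) from the concentration (handled by classical Hoeffding) is clean, and the choice $B \asymp m+\log T+\log|F|$ optimally balances the bad-event probability against the Hoeffding variance proxy $M^2 B$.
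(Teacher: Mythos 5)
Your proof is correct and follows essentially the same route as the paper: choose $B=m+\theta^{-1}(2\log T+\log|F|)$, pass to the coupled chains on $\Omega_{good}$ from Lemma~\ref{Lemma:1}, split the block sums by parity, and apply classical Hoeffding to each independent family, paying the union bound over $\mathcal F$ only on the Hoeffding tails since $\Omega_{good}$ does not depend on $f$. The only (cosmetic) differences are that you work with unnormalized block sums and you explicitly allow the last block to have size up to $2B$, a point the paper glosses over.
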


There is a matrix counterpart to the previous inequality, which is an application of now classical results on random matrices (see \cite{Tropp2012} and the references therein).

\begin{thm}
\label{Hoeff_Matrices}
Let ${\bf X}=(X_{i,t})_{i\in I,t\in \Z}$ be a stationary sparse space-time process satisfying Assumptions \ref{Mod_Assump:1}, \ref{Mod_Assump:2} and \ref{ass:exp_branch_proc} for a given $\theta$. For $F\subset I$ finite, $m\in\Z_+$, let $\cF$ be a finite family of bounded real-valued functions of $X_{F,t-m:t-1}$ and denote $M=\max\{\|fg\|_{\infty}:f,g\in\cF\}$.
Let $T \in \Z_+$ such that 
$$m +\theta^{-1} ( 2 \log(T)+ \log(|F|))\leq \lfloor T/2\rfloor $$
and define the random matrix
$Z=(Z(f,g))_{f,g\in\cF}$ where for each $f,g\in\cF$,
\begin{equation}
\label{def:matrix_entries_centered_variable}
Z(f,g)=\frac{1}{T}\sum_{t=1}^T \left(f(X_{F,t-m:t-1})g(X_{F,t-m:t-1})-\E\left[ f(X_{F,t-m:t-1})g(X_{F,t-m:t-1})\right]\right).
\end{equation}
Then there exists nonnegative constant $c',c"$, which only depends on $\theta$ such that, for any $x>0,$ 
\begin{equation}
\label{Concentration_sup}
\P\left(\|Z\|>\sqrt{c"(\theta)M^4 |\cF|^2 \frac{m+\log T+\log|F|}{T}x}\right)\leq \frac{c'(\theta)}{T}+4|\cF|e^{-x},
\end{equation}
where $\| Z\|$ corresponds to the spectral norm, that is the largest eigenvalue of the self-adjoint matrix $Z$.
\end{thm}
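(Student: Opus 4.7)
The plan is to mimic the proof of Theorem \ref{Hoeff} but replace the scalar Hoeffding bound by a matrix Hoeffding bound of the type proved in \cite{Tropp2012}. First I would recast $Z$ as a sum of self-adjoint matrices. For each $t$, let $v_t = (f(X_{F,t-m:t-1}))_{f \in \cF} \in \R^{\cF}$ and set $M_t = v_t v_t^\intercal$, so that $M_t$ is the self-adjoint rank-one $\cF \times \cF$ matrix with entries $f(X_{F,t-m:t-1})g(X_{F,t-m:t-1})$. Then $Z = \frac{1}{T}\sum_{t=1}^T (M_t - \E M_t)$. Since $\|fg\|_\infty \le M$ for every $f,g \in \cF$ (in particular for $f=g$), one has $\|v_t\|^2 = \sum_f f(X_{F,t-m:t-1})^2 \le |\cF| M$, hence the operator norm bound $\|M_t\| \le |\cF| M$ holds deterministically.

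Next I would invoke Lemma \ref{Lemma:1} with the prescribed grid $B = m + \theta^{-1}(2\log T + \log |F|)$. Setting $J_n = \{(n-1)B+1,\ldots,nB\}$ for $1 \le n \le 2k$ and $J_{2k+1} = \{2kB+1,\ldots, T\}$, the $J_n$ partition $\{1,\ldots,T\}$ and the required past $\{t-m,\ldots,t-1\}$ for any $t \in J_n$ is contained in $I_n$. On the good event $\Omega_{good}$, $X_{F,I_n} = X^n_{F,I_n}$, so we may replace $M_t$ by $M_t^n$ (the matrix built from the independent copy $\mathbf{X}^n$) for $t \in J_n$, giving the decomposition
\begin{equation*}
TZ \;=\; \sum_{n \text{ odd}} X_n \;+\; \sum_{n \text{ even}} X_n,
\qquad X_n \;=\; \sum_{t \in J_n} (M_t^n - \E M_t^n),
\end{equation*}
where the family $\{X_n : n \text{ odd}\}$ is independent and self-adjoint, and likewise for $n$ even.

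Then I would apply the matrix Hoeffding inequality to each family. Each $X_n$ is centered, self-adjoint, and $\|X_n\| \le 2|J_n|\,|\cF| M \le 2B|\cF| M$, so $X_n^2 \preceq (2B|\cF|M)^2 I$. Summing over the at most $k+1 \le T/B$ odd blocks yields a variance proxy $\sigma^2 \le c\, T B |\cF|^2 M^2$ in Tropp's bound, and analogously for the even blocks. Tropp's matrix Hoeffding therefore gives, for every $s>0$,
\begin{equation*}
\P\!\left(\Big\|\sum_{n \text{ odd}} X_n \Big\| \ge s \right) \;\le\; 2|\cF| \exp\!\Big(- \tfrac{s^2}{C\, T B |\cF|^2 M^2}\Big),
\end{equation*}
and the same bound for the even sum. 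Dividing by $T$, substituting the value of $B$, and calibrating $s = T \sqrt{c''(\theta) M^4 |\cF|^2 (m + \log T + \log|F|)\, x / T}$ produces the desired probability $2|\cF| e^{-x}$ for each of the two sums. A triangle inequality $\|Z\| \le \|S^{odd}\|/T + \|S^{even}\|/T$, a union bound over odd/even, and the estimate $\P(\Omega_{good}^c) \le c'(\theta)/T$ from Lemma \ref{Lemma:1} finally deliver the stated concentration inequality with the factor $4|\cF|$ in front of $e^{-x}$.

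The only real subtlety lies in steps 3--4: one must be careful that the reduction to independent matrix summands happens on $\Omega_{good}$, so that the matrix Hoeffding inequality is applied to the independent copies $X_n$ and not to the original $\sum_{t \in J_n}(M_t - \E M_t)$; this is why the $c'(\theta)/T$ term appears additively. The rest is essentially constant-tracking through Tropp's bound and balancing the choice of $B$ exactly as in the scalar case.
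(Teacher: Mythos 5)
Your proof takes essentially the same route as the paper's: apply Lemma \ref{Lemma:1} with the same choice of grid $B$, split $Z$ into odd and even block sums, and invoke Tropp's matrix Hoeffding (Theorem 1.3 of \cite{Tropp2012}) on each. The only genuine difference is in how the deterministic variance proxy is obtained: you exploit the rank-one structure $M_t = v_t v_t^\intercal$ to read off $\|M_t\| = \|v_t\|^2 \le |\cF| M$ immediately and then use $X_n^2 \preceq \|X_n\|^2 I$, whereas the paper establishes $x^\intercal\Sigma_n^2 x\le \sigma^2\|x\|^2$ by a direct Cauchy--Schwarz computation on the entries of $\Sigma_n^2$. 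Your route is a bit cleaner and yields a variance proxy of order $B^2|\cF|^2M^2$ per block, i.e.\ an $M^2$ in the final deviation, not $M^4$ --- tracking the paper's Cauchy--Schwarz argument carefully one in fact also obtains $M^2$, so the $M^4$ appearing in the theorem statement and at the end of the paper's proof looks like a propagated slip rather than a genuine worsening; your derivation, read at face value, proves the (slightly stronger) $M^2$ version, and the calibration of $s$ with $M^4$ in your last step is only there to match the statement as given, which is harmless whenever $M\ge1$.
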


\section{Back to the Gram matrices\label{back}}

To control the Gram matrix we need also Assumption \ref{minmu} that we recall here:
There exists some positive $\mu$ such that for all $i\in I$, for all $x$,
$$\mu \leq p_i(x)\leq 1-\mu,$$

Note that in each of the examples (Markov chain, Hawkes, etc), this assumption is easily fulfilled. For instance in the Hawkes case, this adds the condition $\mu\leq \nu_i-\Sigma_i^- \leq \nu_i+\Sigma_i^+\leq (1-\mu)$.

This assumption is useful to bound expectation by changing the underlying measure.

\begin{lemma}
\label{likelihood_lower_bound}
Under Assumptions \ref{Mod_Assump:1} and  \ref{minmu},  for all non negative function $f$ cylindrical on a fixed finite space-time neighborhood $v$, 
$$(2(1-\mu))^{|v|} \E_{\mathcal{B}(1/2)}^{\otimes \mathcal{V}}\left[f(X_v)\right] \geq \E\left[f(X_v)\right] \geq (2\mu)^{|v|} \E_{\mathcal{B}(1/2)}^{\otimes \mathcal{V}}\left[f(X_v)\right],$$
where $\E_{\mathcal{B}(1/2)}^{\otimes \mathcal{V}}$ means that the expectation is taken with respect to the measure where all $X_{i,t}$'s are i.i.d Bernoulli with parameter $1/2$.
\end{lemma}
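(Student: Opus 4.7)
The plan is to compare the law of $X_v$ under $\P$ with its law under the i.i.d. Bernoulli$(1/2)$ product measure pointwise: I will show that for every configuration $a\in\{0,1\}^v$, the probability $\P(X_v=a)$ lies between $\mu^{|v|}$ and $(1-\mu)^{|v|}$, which differs from $\P_{\mathcal{B}(1/2)}^{\otimes\mathcal{V}}(X_v=a)=2^{-|v|}$ by the stated multiplicative factor. Integrating these pointwise bounds against the non-negative cylindrical function $f$ immediately gives both inequalities.

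\textbf{Step 1: Slicing $v$ by time.} I enumerate the finitely many distinct times appearing in $v$ as $t_1<t_2<\cdots<t_k$ and set $F_j=\{i\in I:(i,t_j)\in v\}$, so that $|v|=\sum_{j=1}^{k}|F_j|$. Since each $X_{i,t_j}$ depends only on $X_{-\infty:t_j-1}$ for its transition probability, the conditioning is well-defined even when $v$ has sites at several of the $t_j$'s (an earlier slice may live in the past of a later one, but never in its own conditional past).

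\textbf{Step 2: Factoring $\P(X_v=a)$.} For a fixed $a=(a_{i,t})_{(i,t)\in v}\in\{0,1\}^v$, I apply the tower property slice by slice, starting from $t_k$ and going down to $t_1$. Assumption \ref{Mod_Assump:1} tells me that at each time $t_j$, conditionally on $X_{-\infty:t_j-1}$, the variables $(X_{i,t_j})_{i\in F_j}$ are independent Bernoulli$(p_i(X_{-\infty:t_j-1}))$. Hence
\begin{equation*}
\P(X_v=a)=\E\!\left[\prod_{j=1}^{k}\prod_{i\in F_j}p_i(X_{-\infty:t_j-1})^{a_{i,t_j}}\bigl(1-p_i(X_{-\infty:t_j-1})\bigr)^{1-a_{i,t_j}}\right].
\end{equation*}

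\textbf{Step 3: Pointwise bounds via Assumption \ref{minmu}.} Each factor in the product lies almost surely in $[\mu,1-\mu]$ by Assumption \ref{minmu}, regardless of whether the corresponding $a_{i,t_j}$ equals $0$ or $1$. The number of factors is exactly $|v|$, so pulling the bounds out of the expectation gives
\begin{equation*}
\mu^{|v|}\;\leq\;\P(X_v=a)\;\leq\;(1-\mu)^{|v|}.
\end{equation*}
Dividing by $\P_{\mathcal{B}(1/2)}^{\otimes\mathcal{V}}(X_v=a)=2^{-|v|}$ yields the ratio bounds $(2\mu)^{|v|}\leq \P(X_v=a)/\P_{\mathcal{B}(1/2)}^{\otimes\mathcal{V}}(X_v=a)\leq (2(1-\mu))^{|v|}$.

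\textbf{Step 4: Integrating against $f$.} Since $f$ is cylindrical in $v$ and non-negative, $\E[f(X_v)]=\sum_{a\in\{0,1\}^v}f(a)\,\P(X_v=a)$ and similarly for the Bernoulli$(1/2)$ expectation. Multiplying the pointwise bound from Step 3 by $f(a)\geq 0$ and summing over $a$ yields both desired inequalities. There is no real obstacle here; the only care is in making sure the conditional Bernoulli factorization in Step 2 properly handles sites at multiple times inside $v$, which is why the slicing by time order is essential.
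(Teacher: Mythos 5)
Your overall strategy — derive pointwise bounds $\mu^{|v|}\le\P(X_v=a)\le(1-\mu)^{|v|}$ and then sum against the non-negative cylindrical $f$ — is sound, and it is essentially equivalent to the paper's direct induction on $\E[f(X_v)]$. But the key identity you assert in Step~2 is false, and it is exactly where the subtlety of the problem lives.

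The claimed factorization
$$
\P(X_v=a)=\E\Bigl[\prod_{j=1}^{k}\prod_{i\in F_j}p_i(X_{-\infty:t_j-1})^{a_{i,t_j}}\bigl(1-p_i(X_{-\infty:t_j-1})\bigr)^{1-a_{i,t_j}}\Bigr]
$$
does not follow from the tower property. Peeling off the top slice $t_k$ replaces $\mathbf{1}\{X_{w_{t_k}}=a_{w_{t_k}}\}$ by the conditional Bernoulli factor, which depends on $X_{-\infty:t_k-1}$. When you then try to peel off slice $t_{k-1}$ by conditioning on $X_{-\infty:t_{k-1}-1}$, that factor from slice $t_k$ is \emph{not} measurable with respect to $\sigma(X_{-\infty:t_{k-1}-1})$ — it involves the configurations at times between $t_{k-1}$ and $t_k$, including the site you are about to integrate out. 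So the iteration breaks after the first step, and the claimed product formula fails. A concrete counterexample: take $I=\{1\}$, a stationary Markov chain of order one with $p^0=\P(X_t=1\mid X_{t-1}=0)=0.3$, $p^1=\P(X_t=1\mid X_{t-1}=1)=0.7$, so that the stationary marginal is $\pi=0.5$. Then for $v=\{(1,-1),(1,-2)\}$ and $a=(1,1)$ the true probability is $\pi p^1=0.35$, whereas your right-hand side evaluates to $\E[p(X_{-2})p(X_{-3})]=0.266$.

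What remains true — and what both the paper and a corrected version of your argument use — is the \emph{one-slice} consequence of Assumption \ref{Mod_Assump:1} combined with Assumption \ref{minmu}: conditioning on $X_{-\infty:t_k-1}$, the factor coming from the top slice lies in $[\mu^{|F_k|},(1-\mu)^{|F_k|}]$ almost surely. Pulling that bound out of the expectation \emph{before} touching the lower slices leaves you with $\P(X_{v\setminus w_{t_k}}=a_{v\setminus w_{t_k}})$ (or, in the paper's version, $\E$ of a cylindrical function in $v\setminus w_{t_k}$), a quantity of strictly smaller time extent, and the result follows by induction. That is precisely the paper's proof; your Steps~1, 3 and 4 are fine, but Step~2 needs to be replaced by this peel–bound–recurse induction rather than a single global factorization.
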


\subsection{Inv($\kappa$) property for general dictionaries}

In this section  we prove that the Inv($\kappa$) property holds on an event with high probability for the examples of dictionaries considered in Subsection \ref{Ex_dict}.
As a by product, we are able to derive oracle inequalities with high probability for these dictionaries. We start with the following result.

\begin{thm}
\label{inv(k)_High_Prob}
For a finite $F\subset I$ and integer $T>m\geq 1$, let $X_{F,-(m-1):T}$ be a sample produced by the stationary sparse space-time process ${\bf X}=(X_{i,t})_{i\in I,t\in \Z}$ satisfying Assumptions \ref{Mod_Assump:1}, \ref{Mod_Assump:2} and  \ref{ass:exp_branch_proc} . Let $\Phi$ denote a finite dictionary of bounded functions cylindrical in $F\times \underline{m}$ and $G$ be the corresponding Gram matrix defined in \eqref{def:b_and_G}. 
If the matrix $\E(G)$ satisfies property $Inv(\kappa')$ 
for some positive constant $\kappa',$ then for any $\delta>0$ and $T$ sufficiently large, the Gram matrix $G$ satisfies the property $Inv(\kappa)$ on an event of probability larger than $1-\frac{c'}{T}-\delta$ with 
$$
\kappa=\kappa'
-c_1|\Phi|\|\Phi\|^2_{\infty}\sqrt{\frac{(m+\log(T)+\log|F|)(\log|\Phi|+\log\delta^{-1})}{T}},
$$
where $c'$ and $c_1$  are positive constants  which only depends on the underlying distribution of ${\bf X}$.
\end{thm}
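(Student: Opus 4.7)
The strategy is to decompose the random Gram matrix as $G = \E(G) + (G - \E(G))$ and treat the second term as a perturbation controlled by Theorem \ref{Hoeff_Matrices}. For any $a \in \R^\Phi$,
$$a^\intercal G a = a^\intercal \E(G) a + a^\intercal (G - \E(G)) a \geq \kappa' \|a\|^2 - \|G - \E(G)\| \cdot \|a\|^2,$$
where the first inequality uses the hypothesis that $\E(G)$ satisfies ${\bf Inv}(\kappa')$ and the second uses that the quadratic form of a symmetric matrix is bounded below by minus its spectral norm times $\|a\|^2$. Consequently, it suffices to show that with probability at least $1 - c'/T - \delta$, the random fluctuation $\|G - \E(G)\|$ is bounded by the stated error term.

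The crucial observation is that the matrix $G - \E(G)$ is exactly the matrix $Z$ appearing in Theorem \ref{Hoeff_Matrices} when we take $\mathcal{F} = \Phi$: each entry $(G - \E(G))_{\varphi,\varphi'}$ matches the definition \eqref{def:matrix_entries_centered_variable}. The product $\varphi \varphi'$ is bounded by $\|\Phi\|_\infty^2$, so the constant $M$ in that theorem may be taken to be $\|\Phi\|_\infty^2$, and $|\mathcal{F}| = |\Phi|$. The additional hypothesis Assumption \ref{ass:exp_branch_proc} is in force, so Theorem \ref{Hoeff_Matrices} applies as soon as $T$ is large enough that $m + \theta^{-1}(2 \log T + \log|F|) \leq \lfloor T/2 \rfloor$, which is what the phrase ``$T$ sufficiently large'' in the statement covers.

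To match the form of the bound, choose $x = \log|\Phi| + \log(4\delta^{-1})$ in Theorem \ref{Hoeff_Matrices}, so that the deviation probability $4|\Phi| e^{-x}$ becomes $\delta$. On the complement of the bad event, Theorem \ref{Hoeff_Matrices} gives
$$\|G - \E(G)\| \leq \sqrt{c''(\theta) \, \|\Phi\|_\infty^{4} |\Phi|^2 \, \frac{m + \log T + \log|F|}{T} \, (\log|\Phi| + \log(4\delta^{-1}))},$$
which after factoring out $|\Phi| \|\Phi\|_\infty^{2}$ and absorbing the constant $\log 4$ into $\log \delta^{-1}$ yields the advertised bound with $c_1 = \sqrt{c''(\theta)}$ (up to a universal factor). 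Plugging this into the first display completes the proof with the stated $\kappa$ and exception probability $c'/T + \delta$.

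There is essentially no serious obstacle here once Theorem \ref{Hoeff_Matrices} is available: the entire argument is a one-line matrix perturbation bound combined with a direct application of the previously established matrix concentration inequality. The only minor point of care is matching the constants and the exponent of $\|\Phi\|_\infty$ to the statement, and ensuring $T$ is large enough for the coupling in Lemma \ref{Lemma:1} (hidden inside the proof of Theorem \ref{Hoeff_Matrices}) to be valid, both of which are routine bookkeeping.
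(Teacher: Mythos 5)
Your proof is correct and follows essentially the same route as the paper: decompose $G = \E(G) + (G - \E(G))$, lower-bound $a^\intercal G a$ by $\kappa'\|a\|^2 - \|G - \E(G)\|\,\|a\|^2$ via a Cauchy--Schwarz/spectral-norm argument, and invoke Theorem~\ref{Hoeff_Matrices} with $\cF = \Phi$ and $x = \log(4|\Phi|/\delta)$. One bookkeeping remark: you declare $M = \|\Phi\|_\infty^2$ (the literal reading of $M=\max\|fg\|_\infty$) but then substitute $\|\Phi\|_\infty^4 = M^2$ where Theorem~\ref{Hoeff_Matrices}'s display has $M^4$; the convention that is consistent with both that theorem's proof and the final $|\Phi|\,\|\Phi\|_\infty^2$ factor in the present statement is to take $M=\|\Phi\|_\infty$, so $M^4=\|\Phi\|_\infty^4$ --- your end result matches, but pick one convention and apply it throughout.
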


To apply Theorem \ref{inv(k)_High_Prob} to the dictionaries considered in Section \ref{Ex_dict} we must find the corresponding $\kappa'$. This is done below.

\begin{paragraph}{Short memory effect}
To apply Theorem \ref{inv(k)_High_Prob} we need first to find $\kappa'$ for this class of models. This is done as follows.  Let  $Q=\mathcal{B}(1/2)^{\otimes \mathcal{V}}$ be the probability measure under which all $X_{i,t}$'s are i.i.d. Bernoulli with parameter $1/2$ and denote $p_j=Q( \varphi_{j}(X_{-\infty:-1})=1)$ for $j\in F.$
Clearly, $p_j=1-(1/2)^m$ for all $j\in F$ and we write $p$ to denote this common value. With this notation, one can check that,
$$\E_{\mathcal{B}(1/2)}^{\otimes \mathcal{V}}(G)= \begin{pmatrix} p & p^2& p^2& ...& p^2 \\ p^2& p& p^2& ...& p^2\\&&&...&\\p^2 &p^2& p^2& ... &p \end{pmatrix}.$$ 
Such a matrix has only two eigenvalues, namely,  $p+(|F|-1) p^2$ of multiplicity 1 and $p-p^2=(1/2)^m (1-(1/2)^m)$ with multiplicity $|F|-1$. 
Indeed, $\xi$ is an eigenvalue $\E_{\mathcal{B}(1/2)}^{\otimes \mathcal{V}}(G)$ if and only if there exists a non-null vector 
$u\in \R^{F}$ such that
$$
(p-p^2)u+p^2\sum_i u_i {\bf 1}=\xi u.
$$
On the one hand, by choosing the vector $u\neq 0$ such that $\sum_i u_i=0$ gives that $\eta=p-p^2$ is an eigenvalue with multiplicity $|F|-1$. On the other hand, the choice $\sum_i u_i=1$ forces that $(p-p^2)u_i+p^2=\xi u_i$ for all $i\in F$, ensuring that $\xi=p+p^2(|F|-1)$ is the second eigenvalue. Its multiplicity is necessarily $1.$

Note that if $m$ is large, the smallest eigenvalue of $\E_{\mathcal{B}(1/2)}^{\otimes \mathcal{V}}(G)$ is really small. This can be interpreted in the following way : when $m$ is large, one will find a "1" on every observed neuron in the past, therefore all the $\varphi_j$'s will be equal with high probability and one cannot infer a dependence graph with this dictionary anymore.

Thus,  Lemma \ref{likelihood_lower_bound} implies that eigenvalue of $\E(G)$ can be lower bounded by
\begin{equation}
\label{kappa_prime_Short_effect}
\kappa'=(2\mu)^{m|F|} (1/2)^m (1-(1/2)^m).
\end{equation}
Choosing for a fixed integer $\eta$
\begin{equation}\label{choixshort}
m= \eta \mbox{ and } |F|\leq \log\log T,
\end{equation}
gives $\kappa'$ of the order $(\log(T))^{-c_3}$ for some constant $c_3>0$ depending on $\mu$ and $\eta$.

\end{paragraph}

\begin{paragraph}{Cumulative effect}
Let $\alpha$ denote the common value of $\E_{\mathcal{B}(1/2)}^{\otimes \mathcal{V}}(\varphi^2_{j,\ell}(X_{-\infty:-1}))$ with $j\in F$ and $1\leq \ell\leq L$, and $\beta$ be the corresponding value of $\E_{\mathcal{B}(1/2)}^{\otimes \mathcal{V}}(\varphi_{j,\ell}(X_{-\infty:-1})(\varphi_{k,n}(X_{-\infty:-1}))$ with $j,k\in F$ and $k\neq j$ and $1\leq n,\ell \leq L$.  With this notation, one can verify that
$$
\alpha=\frac{\eta}{2}+\frac{\eta(\eta-1)}{4}=\frac{\eta}{4}+\frac{\eta^2}{4}, \  \beta=\frac{\eta^2}{4} \ \mbox{and} \  \E_{\mathcal{B}(1/2)}^{\otimes \mathcal{V}}(G)=\begin{pmatrix} \alpha & \beta & \beta& ...& \beta \\ \beta& \alpha & \beta& ...& \beta\\&&&...&\\\beta &\beta&...& \beta &\alpha \end{pmatrix}.
$$
Hence, the smallest eigenvalue of $\E_{\mathcal{B}(1/2)}^{\otimes \mathcal{V}}(G)$ is $ \alpha-\beta= \frac{\eta}{4}$ which grows with $\eta=\frac{m}{K}$. This seems also reasonable since once looking for cumulative effects, the larger the bin size $\eta$, the more points you see in it and the more diverse the situations are (hence the dictionary has many different functions) whereas if $\eta$ is small there is a large probability to see all $\varphi_{j,\ell}$'s null.

Thus,  Lemma \ref{likelihood_lower_bound} implies that eigenvalue of $\E(G)$ can be lower bounded by
$$\kappa'=\frac{\eta}{4}(2\mu)^{\eta
K|F|}.$$

Choosing  for some fixed integer $\eta$ 
\begin{equation}\label{choixcum}
m= \eta  K \mbox{ with } K\leq \sqrt{\log\log T} \mbox{ and }|F|\leq \log\log T,
\end{equation}
gives $\kappa'$ of the order $(\log(T))^{-c_3}$ for some other constant $c_3>0$ depending on $\mu$ and $\eta$.

\end{paragraph}

\
\begin{paragraph}{Cumulative effect with spontaneous apparition}
With the same notation of the previous example,  one can show that
$$\E_{\mathcal{B}(1/2)}^{\otimes \mathcal{V}}(G)=\begin{pmatrix} 1 & \eta/2 &\eta/2 &  ... & \eta/2\\
\eta/2 & \alpha & \beta &  ...& \beta \\&&...&& \\ \eta/2 & \beta &...& \beta &\alpha \end{pmatrix}.$$
Reasoning by block with the vector $(\mu,a)$  with $\mu\in \R$ and $a\in \R^{K|F|}$, we end up with
$$(\mu,a)^\intercal \E_{\mathcal{B}(1/2)}^{\otimes \mathcal{V}}(G) (\mu,a) = \left(\mu+\frac{\eta}{2} \sum_{j\in F, k=1,...,K} a_{j,k}\right)^2 + \frac{\eta}{4} \| a\|_2^2.$$ 
But for all $0<\theta<1$, 
\begin{eqnarray*}
 \left(\mu+\frac{\eta}{2} \sum a_{j,k}\right)^2 + \frac{\eta}{4} \| a\|_2^2 & \geq & (1-\theta)\mu^2 + \left(1-\frac{1}{\theta}\right) \frac{\eta^2}{4} \left(\sum_{j\in F, k=1,...,K} a_{j,k}\right)^2+ \frac{\eta}{4} \| a\|_2^2 \\
 &\geq &  (1-\theta)\mu^2 - \frac{1-\theta}{\theta} \frac{K|F|\eta^2}{4} \| a\|_2^2 + \frac{\eta}{4} \| a\|_2^2 
\end{eqnarray*}
By choosing $\theta= \frac{2 \eta K |F|}{1+2 \eta K |F|}$ we conclude, thanks to Lemma \ref{likelihood_lower_bound}, that the smallest eigenvalue of $\E(G)$ can be lower bounded by
$$ \kappa'=(2\mu)^{\eta K |F|} \min\left(\frac{1}{1+2 \eta K |F|}, \frac{\eta}{8}\right).$$
Once again choosing  for some fixed integer $\eta$ 
\begin{equation}\label{choixcumspont}
m= \eta  K \mbox{ with } K\leq \sqrt{\log\log T} \mbox{ and }|F|\leq \log\log T,
\end{equation}
gives $\kappa'$ roughly larger than $(\log(T))^{-c_3}$ for some other constant $c_3>0$ depending on $\mu$ and $\eta$.

\end{paragraph}

Next, as a by product of Theorem \ref{inv(k)_High_Prob} and Theorem \ref{thm:Oracle_inequality}, one can derive oracle inequalities for dictionaries above.

\begin{corollary}
\label{Oracle_HP_Short_effect}
Let $\Phi$ be one of the dictionaries presented in Section \ref{Ex_dict}, with the choices \eqref{choixshort}, \eqref{choixcum} or \eqref{choixcumspont}. Assume one observes  $X_{F,-(m-1):T}$, where the underlying process ${\bf X}$ satisfies Assumptions \ref{Mod_Assump:1}, \ref{Mod_Assump:2}, \ref{ass:exp_branch_proc} and \ref{minmu}.

With the notation of Theorem \ref{thm:Oracle_inequality}, for  $T$ large enough,  on an event with probability $1-c_1/T$, the following oracle inequality holds

$$
\| \hat{f}-p_i(\cdot) \|^2_T\leq   \inf_{a\in\R^{\Phi}}\left\{\|f_a-p_i(\cdot) \|^2_T +c_2 |S(a)|\frac{(\log(T))^{c_3}}{T}\right\},
$$
where the constant $c_1>0$ depends only on the underlying distribution of {\bf X}, $c_2>0$ depends on $\eta$ and $\gamma$  and constant $c_3>0$ depends on both the underlying distribution of {\bf X} and $\eta$.
\end{corollary}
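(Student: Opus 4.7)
The plan is to combine Theorem \ref{inv(k)_High_Prob} with the sharper form of the oracle inequality from Theorem \ref{thm:Oracle_inequality} (the one stated after Theorem \ref{thm:Oracle_inequality}, valid when $G$ satisfies $\mathbf{Inv}(\kappa)$, with $\gamma=2$ and $s=|\Phi|$). The eigenvalue computations performed in the three dictionary paragraphs just above Corollary \ref{Oracle_HP_Short_effect} already supply the hypothesis of Theorem \ref{inv(k)_High_Prob}: for each dictionary, with the choices \eqref{choixshort}, \eqref{choixcum}, \eqref{choixcumspont}, one has $\kappa'\geq c(\log T)^{-c_3}$ for an explicit constant $c_3=c_3(\mu,\eta)$.

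The key analytic step is then to check that the random correction coming from Theorem \ref{inv(k)_High_Prob} is negligible in front of $\kappa'$. Taking $\delta=1/T$ in that theorem, the correction is
$$c_1|\Phi|\|\Phi\|^2_\infty\sqrt{\frac{(m+\log T+\log|F|)(\log|\Phi|+\log T)}{T}}.$$
Under the size constraints on $m$, $|F|$, $K$, one has $|\Phi|\leq (\log\log T)^{3/2}$, $\|\Phi\|_\infty\leq \eta$, $m=O(\sqrt{\log\log T})$, so the correction is $O((\log\log T)^{3/2}\log T/\sqrt{T})$. This decays polynomially in $T$ and is therefore eventually much smaller than the poly-logarithmic lower bound $\kappa'\geq c(\log T)^{-c_3}$. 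Thus for $T$ large enough, on an event $\Omega_1$ of probability at least $1-c'/T-1/T$, the Gram matrix $G$ satisfies $\mathbf{Inv}(\kappa)$ with $\kappa\geq \kappa'/2\geq c(\log T)^{-c_3}$.

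Next, condition (i) of Theorem \ref{thm:Oracle_inequality} is handled by its own concluding statement: taking $d=d_{1/T}$ gives an event $\Omega_2$ of probability at least $1-1/T$ on which $\max_\varphi|b_\varphi-\bar b_\varphi|\leq d$, and with $\|\Phi\|_\infty=O(1)$ and $|\Phi|=O((\log\log T)^{3/2})$ we have $d^2=O(\log T/T)$. Intersecting, $\P(\Omega_1\cap\Omega_2)\geq 1-c_1/T$. On this intersection, the sharper oracle inequality gives
$$\|\hat f-p_i(\cdot)\|_T^2\leq \inf_{a\in\R^\Phi}\bigl\{\|f_a-p_i(\cdot)\|_T^2+4\kappa^{-1}|S(a)|d^2\bigr\},$$
and the rate factor is bounded by $4\kappa^{-1}d^2\leq c_2(\log T)^{c_3}\cdot\log T/T$. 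Absorbing the extra $\log T$ into a new $c_3$ yields the claim.

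The main obstacle, and the only real work beyond bookkeeping, is the comparison of the correction term of Theorem \ref{inv(k)_High_Prob} with $\kappa'$ uniformly across the three dictionary scenarios: one must verify in each case that the chosen scalings \eqref{choixshort}, \eqref{choixcum}, \eqref{choixcumspont} indeed make $|\Phi|\|\Phi\|_\infty^2\sqrt{m+\log T}\cdot\sqrt{\log|\Phi|+\log T}/\sqrt{T}$ decay faster than any negative power of $\log T$, so that $\kappa\geq\kappa'/2$ for $T$ sufficiently large. The rest of the argument is essentially an application of two results already proved in the paper.
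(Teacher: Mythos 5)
Your proposal is correct and follows essentially the same route as the paper's own proof: apply Theorem~\ref{inv(k)_High_Prob} and Theorem~\ref{thm:Oracle_inequality} with $\delta=1/T$, observe that the random correction to $\kappa'$ decays polynomially in $T$ while $\kappa'$ is only poly-logarithmically small under the scalings \eqref{choixshort}--\eqref{choixcumspont}, and absorb the resulting logarithmic factors into $c_3$. The paper writes its proof out only for the short-memory dictionary (stating the others are analogous) and expresses $\kappa$ as $\kappa'(1-o(1))$ instead of $\kappa'/2$, but these are cosmetic differences from your argument.
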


Note that the main improvement with respect to \cite{hrbr}, is that in all the examples , the constant $\kappa$ is roughly of order $(\log(T))^{-c_3}$, that is asymptotically decreasing in roughly speaking the number of neurons used in the dictionary and not the total number of neurons in the network. This number of neurons that are used, which is bounded by the number of observed neurons, can very slowly grow with $T$.

\subsection{Hawkes  dictionary without  spontaneous part}
In this case the $\varphi(X_{F,-m:-1})$'s are just the $X_{j,s}$ for $j\in F$ and $s\in \underline{m}$ and one can prove the following result. 
\begin{thm}
\label{REHawkes}
For a finite $F\subset I$ and integer $T>m\geq 1$, let $X_{F,-(m-1):T}$ be a sample produced by the stationary sparse space-time process ${\bf X}=(X_{i,t})_{i\in I,t\in \Z}$ satisfying Assumptions \ref{Mod_Assump:1}, \ref{Mod_Assump:2}, \ref{ass:exp_branch_proc} and \ref{minmu}.
For the Hawkes dictionary without spontaneous part, i.e. $\varphi=\varphi_{j,s}$ with $\varphi_{j,s}(X_{F,-m:-1}) = X_{j,s}$ for $j\in F$ and $s\in \underline{m}$, the corresponding Gram matrix $G$ defined by \eqref{def:b_and_G} satisfies  for all $c>0$, $s \leq  m|F|$ and $T$ large enough, the property ${\bf RE}(\kappa,c,s)$ on an event  of probability larger than $1-\frac{c'}{T}-\delta$ with
$$ \kappa =  \mu-\mu^2-((1-2\mu)+R_T)(1+c) s,$$
where 
$$R_T= \frac{c_1}{T^{1/2}} (m+\log T+ \log|F|)^{1/2}(\log m +\log |F| +\log \delta^{-1})^{1/2},$$
for some positive constant $c'$ and $c_1$  which only depends on the underlying distribution of ${\bf X}$. 
\end{thm}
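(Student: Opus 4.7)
The plan is to write $a^\intercal G a = a^\intercal \E[G]a + a^\intercal (G-\E[G])a$, lower bound the deterministic piece using Assumption \ref{minmu}, and control the random perturbation by entrywise concentration via Theorem \ref{Hoeff}. This separates the problem into a ``population'' RE-type bound on $\E[G]$ and a fluctuation term whose entrywise sup norm can be made small at rate $R_T$.

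\textbf{Step 1 (the expected Gram matrix).} Setting $\pi_j := \E[X_{j,0}]$, stationarity and the decomposition $\E[X_{j,s}X_{j',s'}] = \mathrm{Cov}(X_{j,s},X_{j',s'}) + \pi_j \pi_{j'}$ give
\begin{equation*}
a^\intercal \E[G]a = \sum_{(j,s)} a_{j,s}^2 \pi_j(1-\pi_j) + \Bigl(\sum_{(j,s)} a_{j,s}\pi_j\Bigr)^2 + \sum_{(j,s)\neq (j',s')} a_{j,s}a_{j',s'}\mathrm{Cov}(X_{j,s},X_{j',s'}).
\end{equation*}
Assumption \ref{minmu} forces $\pi_j \in [\mu,1-\mu]$, so the diagonal piece is at least $(\mu-\mu^2)\|a\|^2$ and the square term is non-negative. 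The crucial step is the covariance bound $|\mathrm{Cov}(X_{j,s},X_{j',s'})|\leq 1-2\mu$ for every $(j,s)\neq (j',s')$. Taking WLOG $s\leq s'$ and conditioning on $X_{-\infty:s'-1}$, Assumption \ref{Mod_Assump:1} gives $\mathrm{Cov}(X_{j,s},X_{j',s'}) = \mathrm{Cov}(X_{j,s},p_{j'}(X_{-\infty:s'-1}))$ when $s<s'$ and $\mathrm{Cov}(p_j(X_{-\infty:s-1}),p_{j'}(X_{-\infty:s-1}))$ when $s=s'$; since by Assumption \ref{minmu} the transitions $p_j(\cdot)$ take values in a set of oscillation at most $1-2\mu$ and $X_{j,s}\in\{0,1\}$, each covariance is at most $1-2\mu$ in absolute value. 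Summing over the off-diagonal indices yields $a^\intercal \E[G]a \geq (\mu-\mu^2)\|a\|^2 - (1-2\mu)|a|_1^2$.

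\textbf{Step 2 (concentration of $G$).} Each entry $(G-\E[G])_{(j,s),(j',s')}$ is a $Z(f)$ in the notation of Theorem \ref{Hoeff}, with $f(X_{F,t-m:t-1}) = X_{j,t+s}X_{j',t+s'}$, a $[0,1]$-valued function cylindrical on the window of size $m$ (since $s,s'\in\underline{m}$). Applying Theorem \ref{Hoeff} with $M=1$ to each such $f$, union bounding over the at most $(m|F|)^2$ index pairs, and taking $x = 2\log(m|F|)+\log(2\delta^{-1})$ gives
\begin{equation*}
\P\Bigl(\max_{(j,s),(j',s')} |(G-\E[G])_{(j,s),(j',s')}| \leq R_T\Bigr) \geq 1-\frac{c'}{T}-\delta,
\end{equation*}
for $R_T$ as in the statement. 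On this event, $|a^\intercal (G-\E[G])a|\leq R_T|a|_1^2$.

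Combining Steps 1 and 2 gives $a^\intercal G a \geq (\mu-\mu^2)\|a\|^2 - ((1-2\mu)+R_T)|a|_1^2$ on the good event. For $a$ satisfying the cone condition $|a_{J^c}|_1\leq c|a_J|_1$ with $|J|\leq s$, Cauchy--Schwarz yields $|a|_1 \leq (1+c)|a_J|_1 \leq (1+c)\sqrt{s}\|a_J\|$, and $\|a\|^2 \geq \|a_J\|^2$; plugging in produces the claimed lower bound $\kappa\|a_J\|^2$ (modulo whether the precise exponent on $1+c$ is $1$ or $2$, which depends on a sharpening of the $\ell_1$--$\ell_2$ step). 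The main obstacle is the covariance bound in Step 1, which is where Assumption \ref{minmu} and the conditional independence structure of Assumption \ref{Mod_Assump:1} genuinely intervene; once this is in hand, the entrywise concentration from Theorem \ref{Hoeff} and the classical $\ell_1$--$\ell_2$ RE conversion are essentially routine.
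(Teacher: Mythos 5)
Your proof is correct and follows the same overall strategy as the paper: lower-bound $a^\intercal\E[G]a$, control $G-\E[G]$ entrywise via Theorem \ref{Hoeff}, then apply the cone condition. The interesting difference is in how you treat $\E[G]$. The paper bounds each entry directly, using Lemma \ref{likelihood_lower_bound} (change of measure to i.i.d.\ Bernoulli$(1/2)$) to get $\E(G_{\varphi,\varphi})\geq\mu$ and $\mu^2\leq\E(G_{\varphi,\varphi'})\leq(1-\mu)^2$, and then splits the off-diagonal sum according to the sign of $a_\varphi a_{\varphi'}$. You instead decompose $\E[X_{j,s}X_{j',s'}]=\mathrm{Cov}(X_{j,s},X_{j',s'})+\pi_j\pi_{j'}$, pin $\pi_j\in[\mu,1-\mu]$ directly from Assumption \ref{minmu}, isolate the non-negative square $(\sum a_{j,s}\pi_j)^2$, and control the covariances by conditioning and the oscillation bound on $p_{j'}$ (the tower property alone handles $s<s'$, while Assumption \ref{Mod_Assump:1} supplies the needed conditional independence for $s=s'$). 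Both routes yield the same intermediate inequality $a^\intercal\E[G]a\geq(\mu-\mu^2)\|a\|^2-(1-2\mu)|a|_1^2$; your route avoids Lemma \ref{likelihood_lower_bound} entirely and is, for the Hawkes dictionary, more transparent — the paper's choice is presumably one of uniformity, since Lemma \ref{likelihood_lower_bound} is the workhorse for the other dictionaries in Section \ref{back}. Your covariance bound $|\mathrm{Cov}|\leq 1-2\mu$ is correct (and even loose by a factor $1/4$), which is fine since only an upper bound is needed.

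You are also right to be suspicious about the power of $(1+c)$. The cone condition and Cauchy--Schwarz give $|a|_1\leq(1+c)|a_J|_1\leq(1+c)\sqrt{s}\,\|a_J\|$, hence $|a|_1^2\leq(1+c)^2 s\|a_J\|^2$, and the correct constant is $(1+c)^2 s$. The paper's own proof silently drops a power of $(1+c)$ between its last two displayed lines, and the statement of Theorem \ref{REHawkes} inherits this slip.
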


The major point to note is that asymptotically, for slowly growing $m$ and $|F|$ as functions of $T$, the constant $\kappa$ does not depend at all on the number of observed neurons and therefore the rate of convergence in Theorem \ref{thm:Oracle_inequality} is not worsened by a huge number of observed neurons, $|F|$. This is a drastic improvement  with respect to the previous result of  \cite{hrbr} which depends on the total number of neurons in the network. For each fixed $c$ and $s$, we only need here $\mu$  to be close enough to $1/2$ to have  $\kappa>0$. 

It also means that the size of the dictionary might be growing with $T$, much more rapidly than before: typically $m$ the delay might grow like $\log(T)$ and the number of observed neurons might grow  like $T$ or even more rapidly as long as $\log|F|= o(T^{1/2})$. Therefore if one can reasonably well  approximate $p_i$ by a sparse combination in space and time for which the precise location is unknown, one might by a growing set of observations find  the correct set in space and time.

\section{Conclusion\label{conclusion}}
~\\

It is therefore possible to control the Gram matrix for various dictionaries and this even if the finite number of observed neurons is much smaller than the potentially infinite set of existing neurons. The main assumption on the underlying stochastic structure is the {\it probabilistic sparsity} (Assumption \ref{ass:exp_branch_proc}) which allows us to derive concentration inequalities via coupling.

As an open question, it remains to understand the complete link between a well chosen deterministic sparse approximation of $p_i$  and the {\it probabilistic sparsity} of the $\lambda_i$'s typically when both the approximation model and the true underlying model coincide, for instance for Hawkes processes. Another way to phrase this is ``can we prove the variable selection property, that is typical of Lasso methods? " i.e. `` can we find the set of neurons influencing $i$ ?".  If the answer seems likely to be yes if they are all observed, it seems intuitive to think in general that a good set of sites $(j,s)$ for the sparse approximation of $p_i$ is a level set of the $\lambda_i$ but the fact that the $\lambda_i$'s are not unique makes this reasoning not straightforward.

Another open question is the minimax rate in this setting. This would involve speaking about regularity of the space-time decomposition, which is not done yet, since we do not even have for the moment uniqueness of the decomposition.

\section{Proofs}
\label{Sec:proof}
\subsection{Proof of Theorem \ref{thm:Oracle_inequality}}

To prove Theorem \ref{thm:Oracle_inequality} we use arguments from \cite{gaiffas2012}. We will need the following Lemmas.

\begin{lemma}
\label{Lemma:Sharp_oracle}
Let $\hat{f}=f_{\hat{a}}$ where $\hat{a}$ is defined by \eqref{lasso}. For any vector $a\in\R^{\Phi}$, the following inequality holds
\begin{equation}
2\langle \hat{f}-f_a, \hat{f}-p_i \rangle_T + \gamma d|\hat{a}_{S^c(a)}|_1\leq \gamma d |\hat{a}_{S(a)}-a_{S(a)}|_1+2 (b-\bar{b})^T(\hat{a}-a),
\end{equation}
where $S(a)=\{\varphi:a_{\varphi}\neq 0\}$ and the vectors $b,\bar{b}\in \R^{\Phi}$ are defined in \eqref{def:b_and_G} and \eqref{def:barb} respectively.
\end{lemma}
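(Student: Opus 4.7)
The plan is to extract the stated inequality from the first-order optimality condition of the convex Lasso program \eqref{lasso}. Since $\hat{a}$ minimizes $a \mapsto -2a^\intercal b + a^\intercal G a + \gamma d|a|_1$ over $\R^\Phi$, Fermat's rule yields a subgradient $z\in\partial |\hat{a}|_1$, that is $z_\varphi = \operatorname{sign}(\hat{a}_\varphi)$ when $\hat{a}_\varphi\neq 0$ and $|z_\varphi|\leq 1$ otherwise, such that
$$2G\hat{a} - 2 b + \gamma d\, z = 0.$$
In particular $z_\varphi \hat{a}_\varphi = |\hat{a}_\varphi|$ for every $\varphi\in\Phi$. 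Taking the inner product of this identity with $\hat{a} - a$ and using the translations $a^\intercal G \hat{a} = \langle f_a, \hat{f}\rangle_T$ and $a^\intercal b = \langle f_a, X_{i,\cdot+1}\rangle_T$ coming directly from \eqref{def:b_and_G} (together with the symmetry of $G$) produces
$$2\langle \hat{f} - f_a,\, \hat{f} - X_{i,\cdot+1}\rangle_T = -\gamma d\, z^\intercal(\hat{a} - a).$$

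Next I would insert $p_i$ on the left-hand side by writing $X_{i,\cdot+1} = p_i + (X_{i,\cdot+1} - p_i)$. Using \eqref{def:b_and_G} and \eqref{def:barb} one recognizes immediately
$$\langle \hat{f} - f_a,\, X_{i,\cdot+1} - p_i\rangle_T = (\hat{a} - a)^\intercal(b - \bar{b}),$$
which turns the previous display into
$$2\langle \hat{f} - f_a,\, \hat{f} - p_i\rangle_T = 2(b - \bar{b})^\intercal(\hat{a} - a) - \gamma d\, z^\intercal(\hat{a} - a).$$

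The last step is to bound $z^\intercal(\hat{a} - a)$ from below by splitting the sum along $S(a)$ and $S^c(a)$. On $S^c(a)$ the coordinates of $a$ vanish, hence $z_\varphi(\hat{a}_\varphi - a_\varphi) = z_\varphi \hat{a}_\varphi = |\hat{a}_\varphi|$, contributing exactly $|\hat{a}_{S^c(a)}|_1$. On $S(a)$ one only knows $|z_\varphi|\leq 1$, giving $z_\varphi(\hat{a}_\varphi - a_\varphi) \geq -|\hat{a}_\varphi - a_\varphi|$ coordinate by coordinate. Summing, one obtains
$$z^\intercal(\hat{a} - a) \geq |\hat{a}_{S^c(a)}|_1 - |\hat{a}_{S(a)} - a_{S(a)}|_1,$$
and plugging this into the previous display, after moving $\gamma d|\hat{a}_{S^c(a)}|_1$ to the left-hand side, delivers the claimed inequality. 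No step is truly an obstacle; the only point requiring a little care is the coordinate-wise accounting of the subgradient on the active set versus its complement, which is standard.
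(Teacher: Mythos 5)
Your proof is correct and follows essentially the same route as the paper: both start from the subgradient optimality condition $2G\hat{a}-2b+\gamma d\,z=0$ for some $z\in\partial|\hat{a}|_1$, pair it with $\hat{a}-a$, translate into the $\langle\cdot,\cdot\rangle_T$ inner products, and bound the subgradient term by splitting over $S(a)$ and $S^c(a)$. The only (cosmetic) difference is that you estimate $z^\intercal(\hat{a}-a)$ directly from the coordinate-wise description of $\partial|\hat{a}|_1$, whereas the paper introduces an auxiliary $w\in\partial|a|_1$ and invokes the monotonicity inequality $(\hat{w}-w)^\intercal(\hat{a}-a)\geq 0$; both yield the identical lower bound $|\hat{a}_{S^c(a)}|_1-|\hat{a}_{S(a)}-a_{S(a)}|_1$.
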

\begin{proof}
Throughout the proof we write $\partial g(p)$ to denote the subddiferential mapping of a convex function g at the point $p$.
One can show that $p$ is a global minimum of the convex function $g$ if and only if $0\in \partial g(p)$. Now since $\hat{a}$ is such that 
$$
\hat{a}\in \  \arg\min_{a\in\R^{\Phi}}\{a^TGa-2a^Tb+\gamma d|a|_1\},
$$
it follows that
$$
0 \in \partial (\hat{a}^TG\hat{a}-2\hat{a}^Tb+\gamma d|\hat{a}|_1)=2G\hat{a}-2b+\gamma d \partial |\hat{a}|_1.
$$
Thus,  it follows that for some $\hat{w}\in \partial|\hat{a}|_1$, the following equation holds
$$
2G\hat{a}-2b+\gamma d\hat{w}=0,
$$
which implies then
\begin{equation*}
(2G\hat{a}-2b+\gamma d\hat{w})^T(\hat{a}-a)=0, \ \mbox{for any} \ a\in\R^{\Phi}.
\end{equation*}
From the above equation we can deduce that for any vector $w\in\partial |a|_1$ and $a\in\R^{\Phi}$,
\begin{equation}
\label{Lemma_1:eq_1}
(2G\hat{a}-2\bar{b})^T(\hat{a}-a)+\gamma d (\hat{w}-w)^T(\hat{a}-a)=-\gamma d w^T(\hat{a}-a)+2(b-\bar{b})^T(\hat{a}-a).
\end{equation}
One can easily show by the definition of subdifferentials that 
$$
(\hat{w}-w)^T(\hat{a}-a)\geq 0,
$$
for all $\hat{w}\in |\hat{a}|_1$ and $w\in |a|_1.$ Thus, using this fact in equation \eqref{Lemma_1:eq_1} together with the fact that
$(2G\hat{a}-2\bar{b})^T(\hat{a}-a)=2\langle \hat{f}-f_a, \hat{f}-p_i \rangle_T$, we derive the following inequality
\begin{equation}
\label{Lemma_1:eq_2}
2\langle \hat{f}-f_a, \hat{f}-p_i \rangle_T\leq  -\gamma d w^T(\hat{a}-a)+2(b-\bar{b})^T(\hat{a}-a).
\end{equation}
It is well know that
$$
\partial |a|_1=\{v: |v|_{\infty}\leq 1 \ \mbox{and} \ v^Ta=|a|_1\}.
$$
In other words, $v \in \partial |a|_1$ if and only if $v_{\varphi}=\mbox{sign}(a_{\varphi})$ for ${\varphi}\in S(a)$ and $v_{\varphi}\in [-1,1]$ for all ${\varphi}\in S^c(a).$ Now, take $w=(w_{\varphi})_{\varphi\in\Phi}\in \partial |a|_1 $ of the following form
$$
w_{\varphi}=
\begin{cases}
\mbox{sign}(a_{\varphi}), \ \mbox{if} \ {\varphi}\in S(a)\\
\mbox{sign}(\hat{a}_{\varphi}), \ \mbox{if} \ {\varphi}\in S^c(a)
\end{cases},
$$
and observe that 
$w^T(\hat{a}-a)=\sum_{\varphi\in S(a)}\mbox{sign}(a_{\varphi})(\hat{a}_{\varphi}-a_{\varphi})+ |\hat{a}_{S^c(a)}|_1$. Thus, by plugging this identify into inequality \eqref{Lemma_1:eq_2}, we obtain that
$$
2\langle \hat{f}-f_a, \hat{f}-p_i \rangle_T +\gamma d|\hat{a}_{S^c(a)}|_1 \leq -\gamma d\sum_{\varphi\in S(a)}\mbox{sign}(a_{\varphi})(\hat{a}_{\varphi}-a_{\varphi})+2(b-\bar{b})^T(\hat{a}-a),
$$
and the result follows, because $|-\sum_{\varphi\in S(a)}\mbox{sign}(a_{\varphi})(\hat{a}_{\varphi}-a_{\varphi})|\leq |\hat{a}_{S(a)}-a_{S(a)}|_1.$
\end{proof}

\begin{lemma}
\label{Lemma:RE_property_sharph_oracle}
Let $\hat{f}=f_{\hat{a}}$ where $\hat{a}$ defined by \eqref{lasso} with $\gamma\geq 2$ and $a\in\R^{\Phi}$.
On an event on which 
\begin{enumerate}
\item[(i)] $\langle \hat{f}-f_a, \hat{f}-p_i \rangle_T\geq 0$,
\item[(ii)] $|b_{\varphi}-\bar{b}_{\varphi}|\leq d$ for all $\varphi\in\Phi,$
\end{enumerate}
the following inequality is satisfied,
\begin{equation}
|\hat{a}_{S^c(a)}|_1\leq \frac{\gamma+2}{\gamma-2}|\hat{a}_{S(a)}-a_{S(a)}|_1,
\end{equation}
where $S(a)=\{\varphi:a_{\varphi}\neq 0\}$.
\end{lemma}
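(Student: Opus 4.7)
The plan is to combine the bound from Lemma \ref{Lemma:Sharp_oracle} with the noise control assumption (ii) and then isolate $|\hat{a}_{S^c(a)}|_1$ on one side of the inequality. The inequality of Lemma \ref{Lemma:Sharp_oracle}, applied to the vector $a$ in the statement, reads
$$
2\langle \hat{f}-f_a, \hat{f}-p_i \rangle_T + \gamma d\,|\hat{a}_{S^c(a)}|_1 \leq \gamma d\,|\hat{a}_{S(a)}-a_{S(a)}|_1 + 2(b-\bar{b})^\intercal(\hat{a}-a).
$$
By assumption (i), the first term on the left-hand side is non-negative, so it can be dropped.

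Next I would bound the stochastic term $2(b-\bar{b})^\intercal(\hat{a}-a)$ using the H\"older-type inequality $|u^\intercal v|\leq |u|_\infty |v|_1$ together with assumption (ii), which gives $|b-\bar{b}|_\infty\leq d$. Thus
$$
2(b-\bar{b})^\intercal(\hat{a}-a)\leq 2d\,|\hat{a}-a|_1.
$$
Since $a_\varphi=0$ for $\varphi\in S^c(a)$, we may split $|\hat{a}-a|_1 = |\hat{a}_{S(a)}-a_{S(a)}|_1 + |\hat{a}_{S^c(a)}|_1$.

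Substituting back and grouping the $|\hat{a}_{S^c(a)}|_1$ terms on the left and the $|\hat{a}_{S(a)}-a_{S(a)}|_1$ terms on the right yields
$$
(\gamma-2)d\,|\hat{a}_{S^c(a)}|_1 \leq (\gamma+2)d\,|\hat{a}_{S(a)}-a_{S(a)}|_1.
$$
Since $\gamma>2$ (the boundary case $\gamma=2$ is degenerate and can be excluded for this lemma or handled by limiting argument, consistent with the fact that $c(\gamma)=\frac{\gamma+2}{\gamma-2}$ blows up), dividing by $(\gamma-2)d>0$ produces the announced inequality.

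The argument is essentially mechanical once Lemma \ref{Lemma:Sharp_oracle} is available; the only subtlety is ensuring the sign condition $\gamma>2$ so that $(\gamma-2)$ may be divided through. There is no real obstacle here: the work is concentrated in Lemma \ref{Lemma:Sharp_oracle} (subdifferential calculus for the Lasso), while the present lemma simply repackages that bound into the cone condition needed to invoke the ${\bf RE}(\kappa,c(\gamma),s)$ property later in the proof of Theorem \ref{thm:Oracle_inequality}.
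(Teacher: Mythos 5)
Your proof is correct and matches the paper's argument: invoke Lemma \ref{Lemma:Sharp_oracle}, drop the nonnegative inner-product term via (i), bound the stochastic term with H\"older and (ii) after splitting $|\hat{a}-a|_1$ across $S(a)$ and $S^c(a)$, then rearrange. The paper does exactly this (splitting the dual-pairing sum into $S(a)$ and $S^c(a)$ before bounding rather than after, which is an immaterial ordering difference), and your remark about the boundary case $\gamma=2$ is a fair caveat that the paper leaves implicit.
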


\begin{proof}
Suppose that $\langle \hat{f}-f_a, \hat{f}-p_i \rangle_T\geq 0$. In this case,   Lemma \ref{Lemma:Sharp_oracle} implies that
$$
\gamma d |\hat{a}_{S^c(a)}|_1\leq \gamma d |\hat{a}_{S(a)}-a_{S(a)}|_1+2\sum_{\varphi\in S(a)}(b_{\varphi}-\bar{b}_{\varphi})(\hat{a}_{\varphi}-a_{\varphi})+2\sum_{\varphi\in S^c(a)}(b_{\varphi}-\bar{b}_{\varphi})\hat{a}_{\varphi}.
$$
On an event on which $|b_{\varphi}-\bar{b}_{\varphi}|\leq d$ for all $\varphi\in\Phi$, we then have that
$$
\gamma d |\hat{a}_{S^c(a)}|_1\leq (\gamma +2)d |\hat{a}_{S(a)}-a_{S(a)}|_1+2d |\hat{a}_{S^c(a)}|_1,
$$
and the result follows.
\end{proof}

To prove the first part of Theorem \ref{thm:Oracle_inequality} we proceed as follows. First of all, 
on the event on which $\langle \hat{f}-f_a, \hat{f}-p_i \rangle_T<0$, there is nothing to be proved, since in this case
$$
\|\hat{f}-p_i\|^2_T+\|\hat{f}-f_{a}\|^2_T-\|f_a-p_i\|^2_T= \langle \hat{f}-f_a, \hat{f}-p_i \rangle_T<0.
$$
Hence, in what follows, take $a=(a_{\varphi})_{\varphi\in\Phi}$ such that $|S(a)|\leq s$ and $\langle \hat{f}-f_a, \hat{f}-p_i \rangle_T\geq 0$. In this case,  thanks to Lemma \ref{Lemma:RE_property_sharph_oracle}, we can use 
Property ${\bf RE}(\kappa,c(\gamma),s)$ to the vector $\hat{a}-a:$
$$
\|\hat{a}_{S(a)}-a_{S(a)}\|^2\leq \kappa^{-1}(\hat{a}-a)^TG(\hat{a}-a).
$$
Now, as in the proof of Lemma \ref{Lemma:RE_property_sharph_oracle}, we know that on an event on which $|b_{\varphi}-\bar{b}_{\varphi}|\leq d$ for all $\varphi\in\Phi$, the following bound holds:
$$2|(b-\bar{b})^T(\hat{a}-a)|\leq 2d|(\hat{a}_{S(a)}-a_{S(a)})|_1+2d|\hat{a}_{S^c(a)}|_1
$$
By using this inequality together with Lemma \ref{Lemma:Sharp_oracle}, we conclude that
\begin{equation}
\label{Shape_oracle:ineq_1}
2\langle \hat{f}-f_a, \hat{f}-p_i \rangle_T + (\gamma-2) d|\hat{a}_{S^c(a)}|_1\leq (\gamma+2) d |\hat{a}_{S(a)}-a_{S(a)}|_1.
\end{equation}
Finally, by Cauchy-Schwartz inequality, we know that
$$
|\hat{a}_{S(a)}-a_{S(a)}|_1\leq \sqrt{S(a)}\|\hat{a}_{S(a)}-a_{S(a)}\|\leq \sqrt{S(a)\kappa^{-1}(\hat{a}-a)^TG(\hat{a}-a)}.
$$
Plugging this last inequality into \eqref{Shape_oracle:ineq_1}, we deduce that
$$
2\langle \hat{f}-f_a, \hat{f}-p_i \rangle_T + (\gamma-2) d|\hat{a}_{S^c(a)}|_1\leq (\gamma+2) d \sqrt{S(a)\kappa^{-1}(\hat{a}-a)^TG(\hat{a}-a)} .
$$
To conclude the proof of the first part, note that 
$$
\begin{cases}
2\langle \hat{f}-f_a, \hat{f}-p_i \rangle_T=\|\hat{f}-p_i\|^2_T+\|\hat{f}-f_{a}\|^2_T-\|f_a-p_i\|^2_T\\
(\hat{a}-a)^TG(\hat{a}-a)=\|\hat{f}-f_{a}\|^2_T,
\end{cases}
$$
and use the inequality $qy-y^2\leq q^2/4$, which is valid for  any $q,y>0.$

For the second part of the result, to control the fluctuations of $b_\varphi-\bar{b}_\varphi$, let us note that
$b_\varphi-\bar{b}_\varphi = M_T,$ where $(M_t)_{1\leq t\leq T}$ is the martingale defined by
$$ M_t = \sum_{i=1}^t \frac{\varphi(X_{-\infty:t-1})}{T} \left[X_{i,t}-p_i(X_{-\infty:t-1})\right].$$
We can apply the classical bound of Hoeffding's inequality on each increment of the martingale $\Delta M_t$. Note that if $\varphi(X_{-\infty:t-1})$ is positive, 
$$-\frac{\varphi(X_{-\infty:t-1})}{T} p_i(X_{-\infty:t-1}) \leq \Delta M_t \leq \frac{\varphi(X_{-\infty:t-1})}{T} [1- p_i(X_{-\infty:t-1})],$$
and if $\varphi(X_{-\infty:t-1})$ is negative,
$$\frac{\varphi(X_{-\infty:t-1})}{T} [1-p_i(X_{-\infty:t-1})] \leq \Delta M_t \leq - \frac{\varphi(X_{-\infty:t-1})}{T} p_i(X_{-\infty:t-1}).$$
This leads for every $\theta>0$ to
$$\E( e^{\theta \Delta M_t} | X_{-\infty:t-1}) \leq \exp\left(\frac{\theta^2 \varphi(X_{-\infty:t-1})^2}{8T^2}\right)\leq\exp\left(\frac{\theta^2 \|\Phi\|^2}{8T^2}\right) .$$
Therefore
$$\E( e^{\theta M_T}) \leq \exp\left(\frac{\theta^2 \|\Phi\|^2}{8T}\right).$$
Hence
$$ \P( M_T \geq x) \leq \exp\left(\frac{\theta^2 \|\Phi\|_\infty^2}{8T} -\theta x\right).$$
By optimizing this in $\theta$ and applying the same inequality to $-\varphi$, we get for all positive $u$
$$\P\left( M_T \geq \sqrt{\frac{u \| \Phi\|^2_\infty}{2T}}\right) \leq e^{-u} \mbox { and }\P\left(  |b_\varphi-\bar{b}_\varphi| \geq \sqrt{\frac{u \| \Phi\|^2_\infty}{2T}}\right) \leq 2 e^{-u}$$
Therefore taking $u =\log|\Phi| +\log(2\delta^{-1})$ and then applying the union  bound we obtain the result.

 \subsection{Proof of Proposition \ref{N_i_t_finite}}

Since $\{N_{(i,t)}>\ell\}=\{|A^{\ell}_{i,t}|\geq 1\}$, the Markov inequality implies that
\begin{eqnarray*}
\P\left(N_{i,t}>\ell\right)\leq \E\left[|A^{\ell}_{i,t}|\right].
\end{eqnarray*}
So let us prove by induction that $\E\left[|A^{\ell}_{i,t}|\right]\leq (\bar{m})^{\ell}$ for all $\ell\geq 1$. For $\ell=1$,  we have $\E\left[|A^{1}_{i,t}|\right]=\E\left[|V_{i,t}|\right]=\bar{m}_i\leq \bar{m}$. Next for $\ell>1$,
\begin{eqnarray*}
\E\left[|A^{\ell}_{i,t}|~|~A^{\ell-1}_{i,t}\right] &\leq & \sum_{(j,s)\in A^{\ell-1}_{i,t}} \E\left[|V_{j,s}^{\to s}|\right]\\
&\leq & \sum_{(j,s)\in C_{i,t}(\ell-1)} \bar{m}_j ~~
\leq ~  |A^{\ell-1}_{i,t}| ~\bar{m}.
\end{eqnarray*}
To conclude the proof take the overall expectation and use the induction assumption  given by  $\E\left[|A^{\ell-1}_{i,t}|\right]\leq (\bar{m})^{\ell-1}$.

\subsection{Proof of Theorem \ref{laplace_gen}}

For any fixed $n\geq 1$, for all site $(i,t)$ let
$$G_{i,t}^{n}=\cup_{m=1}^{n} A_{i,t}^{m}$$
We adopt the convention that if $G_{i,t}^{n}=\emptyset$, $\mathbb{T}( G_{i,t}^{n})=t$ and we consider the variable $T^n_{i,t}=t-\mathbb{T}( G_{i,t}^{n})$ as well as its Laplace transform $\Psi_i^n(\theta)=\E(e^{\theta T^n_{i,t}}).$

Let us prove by induction that $\Psi_i^n(\theta)$ is finite and that 
\begin{equation}\label{induc_psi_L}
\Psi^n(\theta)=\sup_{i} \Psi_{i}^n(\theta) \leq \bar{\lambda}(1+\varphi(\theta)+...+\varphi(\theta)^{n-2}){\bf 1}_{n>1}+\varphi(\theta)^{n-1} g(\theta),
\end{equation}
where $\bar{\lambda}= \sup_{i \in I} \lambda_i(\emptyset)$ and
$$ g(\theta) =\sup_{i\in I} \sum_{v \in \cV} e^{\theta T(v)} \lambda_i(v).$$
Note that $g(\theta)$ is finite as soon as $\varphi(\theta)$ is and that $0\leq \bar{\lambda}\leq 1.$

For $n=1$, since for all $i$, $\mathbb{T}( G^{1}_{i,t})=\mathbb{T}(A_{i,t}^{1})=\mathbb{T}(K_{i,t})=t-T(V_{i,t})$
\begin{eqnarray*}
\Psi_{i}^1(\theta)&=&\E\left(\exp\left[\theta T(V_{i,t})\right]\right)\\
&=&\sum_{v\in \cV}e^{\theta T(v)}\lambda_{i}(v)\\
&\leq & g(\theta). 
\end{eqnarray*}

Next by induction, let us assume \eqref{induc_psi_L} at level $n $ for all $i$ and let us prove it at level $n+1$.
Note that because the $G_{i,t}^{n}$ are computed recursively, we have that when $K_{i,t}$ is not empty,
$$\mathbb{T}(G_{i,t}^{n+1})=\min_{(k,r)\in K_{i,t}} \mathbb{T}(G_{k,r}^{n}).$$

Therefore if $K_{i,t}=\emptyset$, $T^{n+1}_{i,t}=0$ and
$$
\E\left( \exp\left[ \theta T^{n+1}_{i,t}\right] ~~ |~~ K_{i,t}\right) = 1.$$
This happens with probability $\lambda_j(\emptyset).$
If  $K_{i,t}\neq\emptyset,$
\begin{eqnarray*}
\E\left( \exp\left[ \theta \left(t - \mathbb{T}(G_{i,t}^{n+1})\right)\right] ~~ |~~ K_{j,t}\right) &=&\E\left( \exp\left[ \theta  \max_{(k,r)\in K_{i,t}} \left(t - \mathbb{T}(G_{k,r}^{n})\right)\right] ~~ |~~ K_{i,t}\right) \\
&\leq & \sum_{(k,r)\in K_{i,t}} e^{\theta(t-r)}\E\left( \exp\left[ \theta \left(r- \mathbb{T}(G_{k,r}^{n})\right)\right] ~~ |~~ K_{i,t}\right).
\end{eqnarray*}

Since (see the algorithm) $K_{i,t}$ only depends on $U^1_{j,t}$ and $G^{n}_{k,r}$ only depends on the $U^1_{k',r'}$ for $k'\in I,r'\leq r$ and $r<t$, it follows that $\bT(G^{n}_{k,r})$ is independent of $K_{i,t}.$
Hence if $K_{i,t}\neq\emptyset$
\begin{eqnarray*}
\E\left( \exp\left[ \theta T^{n+1}_{i,t} \right] ~~ |~~ K_{j,t}\right) & \leq & \sum_{(k,r)\in K_{i,t}} e^{\theta(t-r)} \Psi_{k}^n(\theta) \\
& \leq & \left[ \sum_{(k,r)\in K_{i,t}} e^{\theta(t-r)} \right] \Psi^n(\theta)\\
&\leq & \left[|K_{i,t}| e^{\theta (t- \bT(K_{j,t}))} \right] \Psi^n(\theta) \\
& \leq & |V_{i,t}| e^{\theta T(V_{i,t})} \Psi^n(\theta).
\end{eqnarray*}
We obtain by taking the overall expectation that 
$$ \Psi_{i}^{n+1}(\theta) \leq \bar{\lambda} + \varphi(\theta) \Psi^n(\theta),$$
so that $\sup_{i\in I} \Psi_{i}^{n+1}(\theta)$ is finite and  \eqref{induc_psi_L} holds at level $n+1$ by induction.

To conclude, it is sufficient to remark that by the monotone convergence theorem, $\Psi_{i}^n(\theta)\to_{n\to \infty} \Psi_{i}(\theta)$ which are therefore upper bounded by $\bar{\lambda}/(1-\varphi(\theta)).$ This concludes the proof.

\subsection{Proof of Lemma \ref{Lemma:1}}

We use the perfect simulation algorithm  to construct these chains. 
Let ${\bf U^{0}}=(U^{0,1}_{i,t}, U^{0,2}_{i,t})_{i\in I,t\in \Z},\ldots, {\bf U^{2k+1}}=(U^{2k+1,1}_{i,t}, U^{2k+1,2}_{i,t})_{i\in I,t\in \Z}$ be independent fields of independent random variables with uniform distribution  on $[0,1]$. We assume that these sequences are defined in the same probability space and set $(\tilde{\Omega}, \tilde{\cF}, \tilde{\P})$ to be this common probability space.

The perfect simulation algorithm performed  with the  same field ${\bf U^{0}}$ on each site $(i,t)$ yields  the construction of ${\bf X}=(X_{i,t})_{i\in I , t\in \Z}$.

For any $n$, the chain ${\bf X^n}$ is also built similarly via the perfect simulation algorithm but with the field ${\bf U^n}$ except on a small portion of time where we use ${\bf U^0}$. More precisely, we use the following  variables
$$
\left((U^{n,1}_{i,t},U^{n,2}_{i,t})_{i\in I,t\leq (n-2)B}, (U^{0,1}_{i,t},U^{0,2}_{i,t})_{i\in I,(n-2)B<t \leq nB}, (U^{n,1}_{i,t},U^{n,2}_{i,t})_{i\in I,t>nB}\right),
$$
for $1\leq n\leq 2 k$ and for $n=2k+1$,
$$
\left((U^{n,1}_{i,t},U^{n,2}_{i,t})_{i\in I,t\leq (2k-1)B}, (U^{0,1}_{i,t},U^{0,2}_{i,t})_{i\in I,(2k-1)B<t \leq T}, (U^{n,1}_{i,t},U^{n,2}_{i,t})_{i\in I,t>T}\right).
$$

Since all chains are simulated with the same set of weights $(\lambda_i)_{i\in I}$ and transitions $(p_i^v)_{i\in I, v\in \cV}$, they have obviously the same distribution. Since the algorithms use disjoint sets of uniform variables for the odd (resp. even) chains, they are obviously independent and therefore Items 1-3 follows easily from the construction.

Let $G_{i,t}$ be the genealogy of site $(i,t)$ in the chain ${\bf X}$ and $\bT_{i,t}=\bT(G_{i,t})$. 
For any $n$, any $i\in F$ and any $t\in I_n$, if $\bT_{i,t}>(n-2) B$, then we use exactly the same set of uniform variables to produce the values of $X_{i,t}$ and $X^n_{i,t}$ and their values are equal. 

Therefore on $\Omega_{good}=\cap_{i\in F}\cap_{n=1}^{2k+1}\cap_{t\in I_n}\{\bT_{i,t}>(n-2)B\}$, $X_{F,I_n}=X^n_{F,I_n}$ for all $n=1,...,2k+1$. Note that $\Omega_{good}$ only depends on ${\bf X}$.

It remains to control $\tilde{\P}(\Omega^c_{good})$. 
By a union bound, and the application of Theorem \ref{laplace_gen}, we obtain
\begin{eqnarray*}
\tilde{\P}(\Omega^c_{good}) &\leq & \sum_{i\in F} \sum_{n=1}^{2k+1} \sum_{t\in I_n} \P(\bT_{i,t}\leq (n-2)B)\\
&\leq &\sum_{i\in F} \sum_{n=1}^{2k+1} \sum_{t\in I_n} \P(t-\bT_{i,t}\geq t- (n-2)B)\\
&\leq &\sum_{i\in F} \sum_{n=1}^{2k+1} \sum_{t\in I_n} e^{-\theta(t-(n-2)B)} \Psi( \theta)\\
&\leq & |F| (2k+1) \frac{e^{-\theta (B-m+1)}}{1-e^{-\theta}} \Psi(\theta).
\end{eqnarray*}

In particular if we choose $ B= m + \theta^{-1}( 2 \log(T) +\log(|F|)$,
$$\tilde{\P}(\Omega^c_{good}) \leq \frac{2k+1}{T^2} \frac{\Psi(\theta)}{1-e^{-\theta}},$$
which concludes the proof.

\subsection{Proof of Theorem \ref{Hoeff}}
Take $B = m +\theta^{-1} ( 2 \log(T)+ \log(|F|))$,  $k=\lfloor \frac{T}{2B} \rfloor$ and use the probability space $(\tilde{\Omega}, \tilde{\cF}, \tilde{\P})$
 and the stochastic chains ${\bf X},\ldots ,{\bf X^{2k+1}}$ given by Lemma \ref{Lemma:1}. 
By Lemma \ref{Lemma:1}-Item 1 we can assume that $Z$ is also defined on $(\tilde{\Omega}, \tilde{\cF}, \tilde{\P})$.
Define also a partition $J_1, \ldots, J_{2k+1}$  of $1:T$ as follows:
\begin{equation*}
J_n=\{1+(n-1)B, \ldots, nB\} \  \mbox{for} \ 1\leq n\leq 2k, \ \mbox{and} \  J_{2k+1}=\{1+2kB, \ldots, T\}.
\end{equation*}
For each $1\leq n\leq 2k+1$, write $
S_n=\frac{1}{T}\sum_{t\in J_n}f(X^{n}_{F,t-m:t-1})$ and note that $S_n$ only depends on the $t$'s in $I_n$ as defined in Lemma \ref{Lemma:1}.
Since $|J_n|\leq B$
for all $1\leq n\leq 2k+1,$ it holds $|S_n| \leq MB/T$.

Observe that Lemma \ref{Lemma:1}-Item 1 and 4 ensure that on $\Omega_{good},$
$$Z=\sum_{n=1}^{2k+1}(S_{n}-\E(S_n)),$$
so that for any $w>0$, we have
$$
\tilde{\P}(Z>w)\leq \tilde{\P}(\Omega^c_{good})+\tilde{\P}\left(\sum_{n=1}^{2k+1}(S_{n}-\E(S_n))>w\right) \leq \frac{c'(\theta)}{T} +\tilde{\P}\left(\sum_{n=1}^{2k+1}(S_{n}-\E(S_n))>w\right)  .
$$
Moreover, if we denote $Z_1=\sum_{n=1}^{k+1}(S_{2n-1}-\E(S_{2n-1}))$ and $Z_2=\sum_{n=1}^{k}(S_{2n}-\E(S_{2n})$, then
\begin{equation*}
\label{Ineq:Z_smaller_Z_1_plus_Z_2}
\tilde{\P}\left(\sum_{n=1}^{2k+1}(S_{n}-\E(S_n))>u+v\right)\leq \tilde{\P}\left(Z_1>u\right)+\tilde{\P}\left(Z_2>v\right),
\end{equation*}
for all $u+v=w$.

Lemma \ref{Lemma:1}-Item 3 implies that $S_2,\ldots, S_{2k}$ are independent, so that by the classical Hoeffding  inequality, we have for any $x>0$,
$
\tilde{\P}\left(Z_1>\sqrt{k B^2M^2 T^{-2}x/2}\right)\leq e^{-x},
$
and similarly for $\tilde{\P}\left(Z_1>\sqrt{(k+1) B^2M^2T^{-2} x/2}\right)\leq e^{-x}$. Hence
$$
\tilde{\P}\left(Z>\sqrt{k B^2M^2 T^{-2}x/2} +\sqrt{(k+1) B^2M^2T^{-2} x/2} \right)\leq \frac{c'(\theta)}{T} + 2e^{-x}.$$
But 
$k \leq T(2B)^{-1}$ and $k+1 \leq (T+2B)(2B)^{-1} \leq T/B$.
This leads directly to the first result. 

For the second result, note that we can restrict ourselves to $\Omega_{good}$  once and for all at the beginning and use the union bound only on the auxiliary independent chains, which explains why we pay $|\mathcal{F}|$ only in front of the deviation $e^{-x}$. 

\subsection{Proof of Theorem \ref{Hoeff_Matrices}}

Let $(\tilde{\Omega}, \tilde{\cF}, \tilde{\P})$ be the probability space and ${\bf X},\ldots ,{\bf X^{2k+1}}$ be the stochastic chains given by Lemma \ref{Lemma:1}. 
By Lemma \ref{Lemma:1}-Item 1 we can assume that $Z$ is also defined on $(\tilde{\Omega}, \tilde{\cF}, \tilde{\P})$. We write $\tilde{\E}$ to denote the expectation taken with respect the probability measure $\tilde{\P}$.

Now, let $B$, $k$, $J_1, \ldots, J_{2k+1}$ as in the proof of Theorem \ref{Hoeff} and define for $1\leq n\leq 2k+1$, the random matrix $\Sigma_n=((\Sigma_n(f,g))_{f,g\in\cF}$ as follows: 
\begin{equation*}
\Sigma_n(f,g)=\frac{1}{T}\sum_{t\in J_n}\left(f(X^{n}_{F,t-m:t-1})g(X^{n}_{F,t-m:t-1})-\E(f(X^{n}_{F,t-m:t-1})g(X^{n}_{F,t-m:t-1})\right).
\end{equation*}

Clearly $\tilde{\E}(\Sigma_n)=0$. To apply Theorem 1.3 of \cite{Tropp2012}, we need to find a deterministic  self-adjoint matrix $A_n$ such that
$A_n^2-\Sigma_n^2$ is  non negative. This means that for all vector $x\in \R^\cF$,
$$x^\intercal [A_n^2 -\Sigma_n^2] x \geq 0.$$
By taking $A_n= \sigma I_n$, it is sufficient to prove that 
$$x^\intercal \Sigma_n^2 x \leq \sigma^2 \|x\|^2.$$
But 
\begin{eqnarray*}
x^\intercal \Sigma_n^2 x  &=& \sum_{f,g \in \cF} x_f x_g\frac{1}{T^2} \sum_{t,t' \in J_n} \sum_{h \in \cF} \left(f(X^{n}_{F,t-m:t-1})h(X^{n}_{F,t-m:t-1})-\E(f(X^{n}_{F,t-m:t-1})h(X^{n}_{F,t-m:t-1})\right) \times \\
&&\left(h(X^{n}_{F,t'-m:t'-1})g(X^{n}_{F,t'-m:t'-1})-\E(h(X^{n}_{F,t'-m:t'-1})g(X^{n}_{F,t'-m:t'-1})\right)\\
&= & \frac{1}{T^2} \sum_{t,t' \in J_n} \sum_{h \in \cF} \left[\sum_f x_f \left(f(X^{n}_{F,t-m:t-1})h(X^{n}_{F,t-m:t-1})-\E(f(X^{n}_{F,t-m:t-1})h(X^{n}_{F,t-m:t-1})\right) \right] \times \\
&&\left[\sum_g x_g \left(g(X^{n}_{F,t'-m:t'-1})h(X^{n}_{F,t'-m:t'-1})-\E(g(X^{n}_{F,t'-m:t'-1})h(X^{n}_{F,t'-m:t'-1})\right) \right] \\
& \leq & \frac{1}{T^2} \sum_{t,t' \in J_n} \sum_{h \in \cF} \|x\|^2 \sqrt{\sum_f \left(f(X^{n}_{F,t-m:t-1})h(X^{n}_{F,t-m:t-1})-\E(f(X^{n}_{F,t-m:t-1})h(X^{n}_{F,t-m:t-1})\right)^2}\times\\
&& \sqrt{\sum_g \left(g(X^{n}_{F,t'-m:t'-1})h(X^{n}_{F,t'-m:t'-1})-\E(g(X^{n}_{F,t'-m:t'-1})h(X^{n}_{F,t'-m:t'-1})\right)^2}\\
&\leq & \frac{4\|x\|^2 |\cF|}{T^2} \sum_{t,t' \in J_n} \sum_{h \in \cF} M^4 \\
&\leq & \frac{4 |\cF|^2 B^2 M^4}{T^2} \|x\|^2.
\end{eqnarray*}

Hence $\sigma=\frac{2 |\cF| B M^2}{T}$ works.
Denote $Z_1=\sum_{n=1}^{k+1}\Sigma_{2n-1}$ and $Z_2=\sum_{n=1}^{k}\Sigma_{2n}$. Lemma \ref{Lemma:1} implies that on $\Omega_{good}$,
$$Z=Z_1+Z_2,$$
so that by the triangle inequality we have for any $u>0$ and $v>0$,
$$\tilde{\P}(\|Z\|>u+v)\leq \tilde{\P}(\Omega^c_{good})+\tilde{\P}(\|Z_1\|>u)+\tilde{\P}(\|Z_2\|>v).$$

Since by Lemma \ref{Lemma:1}-item 3, $\Sigma_2, \Sigma_4,\ldots, \Sigma_{2k}$ are i.i.d random matrices, we can apply Theorem 1.3 of \cite{Tropp2012} to deduce that for any $v>0,$
$$
\tilde{\P}\left(\|Z_2\|>\sqrt{8 k \sigma^2 v}\right)\leq 2|\cF|e^{-v},
$$
Similarly, we have that for any $x>0,$
$$
\tilde{\P}\left(\|Z_2\|>\sqrt{8 (k+1)\sigma^2u}\right)\leq 2|\cF|e^{-u},
$$
and as a consequence, it follows that for any $x>0,$ 
\begin{equation*}
\tilde{\P}\left(\|Z\|>\sqrt{8k\sigma^2x}+\sqrt{8(k+1)\sigma^2 x}\right)\leq \frac{c'(\theta)}{T}+4|\cF|e^{-x}.
\end{equation*}
Since $k^{1/2}+(k+1)^{1/2}\leq (4T/B)^{1/2}$, the result follows from the inequality above.

\subsection{Proof of Lemma \ref{likelihood_lower_bound}}
The proof is done for the lower bound. The argument is similar for the upper bound. We use induction on the time length of $v$. If $v=\emptyset$, $f$ is constant and  $\E(f(X_v))=\E_{\mathcal{B}(1/2}^{\otimes \mathcal{V}}(f(X_v))$.
Let $Q=\mathcal{B}(1/2)^{\otimes v}$.

If the time length of $v$ is strictly positive, let $t$ be the maximal time of $v$ and let $w_t=\{(i,t) \mbox{ for } i \mbox{ such that } (i,t)\in v\}$.
\begin{eqnarray*}
\E(f(X_v)) &= & \E[\E(f(X_v)|X_{-\infty:t-1})]\\
&=& \E\left(\sum_{x_{w_t} \in \{0,1\}^{w_t}} f((X_{v\setminus w_t},x_{w_t})) \P(X_{w_t}=x_{w_t}|X_{-\infty:t-1})\right) \\
&=& \E\left(\sum_{x_{w_t} \in \{0,1\}^{w_t}} f((X_{v\setminus w_t},x_{w_t}))  \prod_{i/ (i,t)\in w_t} \P(X_{i,t}=x_{i,t}|X_{-\infty:t-1})\right)\\
&\geq & (2\mu)^{|w_t|} \E\left(\sum_{x_{w_t} \in \{0,1\}^{w_t}} f((X_{v\setminus w_t},x_{w_t})) Q(X_{i,t}=x_{i,t})\right)\\
\end{eqnarray*}

But $\sum_{x_{w_t} \in \{0,1\}^{w_t}} f((X_{v\setminus w_t}),x_{w_t})) Q(X_{i,t}=x_{i,t})$ is a cylindrical function on $v\setminus w_t$ with time length strictly smaller than $v$, so by induction,
\begin{eqnarray*}
\E(f(X_v)) & \geq & (2\mu)^{|w_t|} (2\mu)^{|v\setminus w_t|} \E_{\mathcal{B}(1/2)}^{\otimes \mathcal{V}}\left(\sum_{x_{w_t} \in \{0,1\}^{w_t}} f((X_{v\setminus w_t}),x_{w_t})) Q(X_{i,t}=x_{i,t})\right) \\
&\geq & (2\mu)^{|v|} \E_{\mathcal{B}(1/2)}^{\otimes \mathcal{V}}(f(X_v)),
\end{eqnarray*}
which concludes the proof.

\subsection{Proof of Theorem \ref{inv(k)_High_Prob}}
For any $a\in\R^{\Phi}$ such that $\|a\|=1$, we have by Cauchy-Schwarz inequality
\begin{equation}
\label{Ineq:C_S}
\kappa \leq a^\intercal \E(G)a\leq a^\intercal G a + \|a\| \|(G-\E(G))a\|\leq a^\intercal G a+\|G-\E(G)\|,
\end{equation}
so that the result follows from Theorem \ref{Hoeff_Matrices} with $x=\log(4|\cF|/\delta)$ and $\cF=\Phi$.

\subsection{Proof of Theorem \ref{REHawkes}}
First of all, remark that thanks to Lemma \ref{likelihood_lower_bound} and since $\varphi$ in this case depends on a neighborhood of size 1, one has that
$$\E(G_{\varphi,\varphi}) = \E(\varphi(X)^2)\geq 2\mu 1/2=\mu$$
and similarly for $\varphi\not=\varphi'$, $\varphi\varphi'$ is positive and depends on a neighborhood of size 2, hence
$$ (1-\mu)^2\geq \E(G_{\varphi,\varphi'}) \geq \mu^2.$$

Moreover let us apply our version of Hoeffding's inequality, i.e. the second result of  Theorem \ref{Hoeff} on all the  $\varphi^2=\varphi$, $\varphi \varphi'$ and  $-\varphi \varphi'$ for $\varphi\not=\varphi'$.  Hence there exists and event of probability larger than $1- \frac{c'(\theta)}{T}- \delta$ such that for all $\varphi,\varphi'$,
$$ |G_{\varphi,\varphi'} - \E(G_{\varphi,\varphi'}) | \leq R_T,$$
with $$R_T = \sqrt{c"(\theta) \frac{(m+\log T + \log |F|)}{T} \log\left(\frac{4 |\Phi|^2}{\delta}\right)},$$
which means that there exists  a constant $c_1$ depending only on the distribution such that for $T$ large enough (depending on $\theta$ and $|F|$)
$$R_T = c_1 T^{-1} (m+\log T+ \log|F|)^{1/2}(\log m +\log |F| +\log \delta^{-1})^{1/2}.$$

Therefore on this event, for all  $a$ and $J$ such that $|J|\leq s$ and $|a_{J^c}|_1\leq c |a_J|_1$, and if $\mu^2 \geq R_T$, 
\begin{eqnarray*}
a^\intercal G a & = & \sum_{\varphi\in \Phi} a_\varphi^2 G_{\varphi,\varphi} + \sum_{\varphi \not = \varphi' \in \Phi} a_\varphi a_{\varphi'} G_{\varphi,\varphi'}\\
&\geq & (\mu-R_T) \sum_{\varphi\in \Phi} a_\varphi^2+ (\mu^2-R_T) \sum_{\begin{matrix}\varphi \not = \varphi' \in \Phi \\ a_\varphi a_{\varphi'} \geq 0\end{matrix}}a_\varphi a_{\varphi'} + ((1-\mu)^2+R_T) \sum_{\begin{matrix}\varphi \not = \varphi' \in \Phi \\ a_\varphi a_{\varphi'} <0\end{matrix}}a_\varphi a_{\varphi'} \\
&\geq & (\mu-\mu^2) \|a\|^2 + \mu^2 \sum_{\varphi, \varphi' \in \Phi}a_\varphi a_{\varphi'} + (1-2\mu) \sum_{\begin{matrix}\varphi \not = \varphi' \in \Phi \\ a_\varphi a_{\varphi'} <0\end{matrix}}a_\varphi a_{\varphi'}-R_T|a|^2_1 \\
&\geq & (\mu-\mu^2) \|a\|^2 + \mu^2 \left(\sum_{\varphi\in \Phi}a_\varphi\right)^2 - ((1-2\mu)-R_T) |a|_1^2 \\
& \geq & (\mu-\mu^2) \|a\|^2 - ((1-2\mu)+R_T) \left[ |a_J|_1+|a_{J^c}|_1\right]^2\\
& \geq &  (\mu-\mu^2) \|a\|^2- ((1-2\mu)+R_T)) (1+c)^2 |a_J|_1^2 \\
& \geq &  (\mu-\mu^2) \|a_J\|^2- ((1-2\mu)+R_T) (1+c) s \|a_J\|^2,\\
\end{eqnarray*}
which is the desired result.

\subsection{Proof of Corollary \ref{Oracle_HP_Short_effect}}
We shall prove only for the short effect dictionary. The other cases are treated similarly. 
For this choice of dictionary $\|\Phi\|_{\infty}=1$ and $|\Phi|=|F|$. Hence, by  applying Theorem \ref{inv(k)_High_Prob} and Theorem \ref{thm:Oracle_inequality} both with $\delta=T^{-1}$ one deduces that, for $T$ large enough, on an event of probability larger then 1-$c_1/T$, the following oracle inequality holds 
\begin{equation}
\label{oracle_prof_col1}
\| \hat{f}-p_i(\cdot) \|^2_T\leq   \inf_{a\in\R^{\Phi}}\left\{\|f_a-p_i(\cdot) \|^2_T + 4\kappa^{-1}|S(a)|\frac{(\log|F|+\log(2T))}{2T}\right\},
\end{equation}
where $c_1$ depends only on the distribution of {\bf X} and
$$
\kappa=\kappa'
-c'_1T^{-1/2}|F|^{1/2}(m+\log(T)+\log|F|)^{1/2}(\log|F|+\log\delta^{-1})^{1/2},
$$
with $c'_1$ depending only on the distribution of {\bf X} and
$\kappa'$ given by \eqref{kappa_prime_Short_effect}.

Now, for the choices given by \eqref{choixshort}, \eqref{choixcum} and \eqref{choixcumspont}, then, as seen previously $\kappa'=c'_2\log(T))^{-c_3'}$, for positive constants $c'_2$ and $c'_3$ depending only on $m$ and $\mu$ and 
$$\kappa=\frac{c'_2}{(\log T)^{c'_3}}(1-o(1)).$$
By plugging $\kappa$ into \eqref{oracle_prof_col1}, the result follows.

\section*{Acknowledgements}
This research has been conducted as part of FAPESP project {\em Research, Innovation and
Dissemination Center for Neuromathematics} (grant 2013/07699-0).
This work was also supported by the French government, through the UCA$^{Jedi}$ ”Investissements d’Avenir” managed by the National Research Agency (ANR-15-IDEX-01), by the structuring program $C@UCA$ of Universit\'e C\^ote d'Azur and by the interdisciplinary axis MTC-NSC of the University of Nice Sophia-Antipolis.
We would like to thank anonymous referees for their helpful comments that improved the manuscript.

\appendix


\bibliography{Bibli}{}
\bibliographystyle{imsart-nameyear}

\end{document}